\def\a{\alpha}
\def\G{\Gamma}
\def\k{\kappa}
\newcommand{\mB}{\mathcal{B}}
\newcommand{\mS}{\mathcal{S}}
\newcommand{\fa}{\mathfrak{a}}
\newcommand{\fM}{\mathfrak{M}}
\newcommand{\fN}{\mathfrak{N}}
\newcommand{\fp}{\mathfrak{p}}
\newcommand{\fq}{\mathfrak{q}}
\newcommand{\bfC}{\mathbf{C}}
\newcommand{\bfF}{\mathbf{F}}
\newcommand{\bfH}{\mathbf{H}}
\newcommand{\bfQ}{\mathbf{Q}}
\newcommand{\bfT}{\mathbf{T}}
\newcommand{\bfZ}{\mathbf{Z}}
\newcommand{\Oo}{\mathcal{O}}
\newcommand{\OF}{\mathcal{O}_F}
\newcommand{\OK}{\mathcal{O}_K}
\newcommand{\AF}{\mathbf{A}_F}
\newcommand{\AFf}{\mathbf{A}_{F,\textup{f}}}
\newcommand{\OFq}{\mathcal{O}_{F,\mathfrak{q}}}
\newcommand{\ov}{\overline}
\newcommand{\be}{\begin{equation}}
\newcommand{\ee}{\end{equation}}
\newcommand{\bes}{\begin{equation*}}
\newcommand{\ees}{\end{equation*}}
\newcommand{\bs}{\begin{split}}
\newcommand{\es}{\end{split}}
\newcommand{\bss}{\begin{split*}}
\newcommand{\ess}{\end{split*}}
\newcommand{\bmat}{\left[ \begin{matrix}}
\newcommand{\emat}{\end{matrix} \right]}
\newcommand{\bsmat}{\left[ \begin{smallmatrix}}
\newcommand{\esmat}{\end{smallmatrix} \right]}
\newcommand{\bml}{\begin{multline}}
\newcommand{\eml}{\end{multline}}
\newcommand{\bmls}{\begin{multline*}}
\newcommand{\emls}{\end{multline*}}
\DeclareMathOperator{\Cl}{Cl}
\DeclareMathOperator{\End}{End}
\DeclareMathOperator{\GL}{GL}
\DeclareMathOperator{\Hom}{Hom}
\DeclareMathOperator{\image}{Im}
\DeclareMathOperator{\Ind}{Ind}
\DeclareMathOperator{\SL}{SL}
\DeclareMathOperator{\Sym}{Sym}
\def\c{\chi}
\def\quf{\mathbf{Q}}
\def\ideleF{\mathbf{A}_F^{\times}}
\def\adeleF{\mathbf{A}_F}
\def\adeleFf{\mathbf{A}_{F,\textup{f}}}
\def\bbf{\mathbf}
\newcommand{\hs}{\hspace{2pt}}
\newcommand{\hf}{\hspace{5pt}}
\newcommand{\Op}{\mathcal{O}_{(\mathfrak{p})}}
\newcommand{\iy}{\infty}
\theoremstyle{plain}
\newtheorem{thm}{Theorem}
\newtheorem{cor}[thm]{Corollary}
\newtheorem{lemma}[thm]{Lemma}
\theoremstyle{definition}
\newtheorem{rem}[thm]{Remark}
\numberwithin{thm}{section}
\numberwithin{equation}{section}
\begin{document}
\author{Krzysztof Klosin}
\title{On Ihara's lemma for degree one and two cohomology over imaginary quadratic fields}
\footnotetext{2010 Mathematics subject classification 11F33 (primary), 11F75 (secondary).\\
{\bf Keywords:} Automorphic forms over imaginary quadratic fields, congruences, group cohomology, Ihara's lemma.}
\date{February 12, 2013}

\begin{abstract}
We prove a version of Ihara's Lemma for degree $q=1,2$ cuspidal cohomology of the symmetric space attached to automorphic forms of arbitrary weight ($k\geq 2$) over an imaginary quadratic field with torsion (prime power) coefficients. This extends an earlier result of the author \cite{Klosin08} which concerned the case $k=2$, $q=1$. Our method is different from \cite{Klosin08} and uses results of Diamond \cite{Diamond91} and Blasius-Franke-Grunewald \cite{BlasiusFrankeGrunewald94}.  We discuss the relationship of our main theorem to the problem of the existence of level-raising congruences.
\end{abstract}

\maketitle

\section{Introduction}
The classical Ihara's lemma states that the kernel of the map $\alpha: J_0(N)^2 \to J_0(Np)$ is Eisenstein if $(N,p)=1$, where $J_0(N')$ denotes the Jacobian of the compatification of the modular curve $\Gamma_0(N') \setminus \bfH$ and $\alpha$ is the sum of the two standard $p$-degeneracy maps. In \cite{Klosin08} the author proved an analogue of this result for degree one parabolic cohomology arising from weight 2 automorphic forms over an imaginary quadratic field $F$. More precisely, for $n=0,1$ let $Y_n$  be the analogue over $F$ of the modular curve $Y_0(Np^n)$ (for precise definitions cf. section \ref{Preliminaries}). Write  $H^q_!(Y_n, \tilde{M}_n)$ for the degree $q$ parabolic cohomology group, where $\tilde{M}_n$ are 
sheaves of sections of the topological covering $\GL_2(F) \setminus 
(V\times M) \rightarrow Y_n$ with $M$ a torsion 
$\mathbf{Z}[\GL_2(F)]$-module 
and $V=\GL_2(\adeleF)/K_n \cdot U_2(\mathbf{C}) \cdot \mathbf{C}^{\times}$ 
for $K_n$ a 
compact 
subgroup, which is an analogue of $\G_0(Np^n)$. 
For a prime ideal $\fp$ of the ring of integers 
$\OF$ of $F$ we have two standard $\fp$-degeneracy maps 
whose sum $H^q_!(Y_0, \tilde{M}_0)^2 \rightarrow H^q_!(Y_1, \tilde{M}_1)$ 
we will call $\alpha_q$.
The main result of \cite{Klosin08} (Theorem 2) then asserts that the 
kernel of $\alpha_1$ is Eisenstein when $M$ is a trivial $\GL_2$-module (weight 2 case) and that in this case $\alpha_q$ is injective  when $M$ has exponent a power of $p$ (cf. [loc.cit.] Remark 12). 

In the current paper we prove an analogous result for the maps $\alpha_q$ for $q=1,2$ for all weights $k \geq 2$. Let us now state the main results in a more precise form. For a positive integer $m$ and an $\OF$-module $M$ let $L(m,M)$ be the module of homogeneous polynomials of degree $m$ in two variables over $F$. Let $\fN$ be an ideal of $\OF$, $\fp\nmid \fN$ a prime ideal of $\OF$ and write $Y_n$ for the symmetric space of level $\fN \fp^n$ (for precise definitions see section \ref{Preliminaries}). Let $\ell\nmid \#\OF^{\times}$ be a prime and let $E$ be a (sufficiently large) finite extension of $\bfQ_{\ell}$ with valuation ring $\Oo$. Then the following Theorem is the main result of this paper (Ihara's lemma for $H^1$ and $H^2$).
\begin{thm} [Theorems \ref{IharaH2} and \ref{IharaH1}] Let $q=1$ or $q=2$. If $q=1$ suppose that $\ell \nmid N(\fN)\#(\OK/\fN \OK)^{\times}$ and that $\ell >k-2$. Then the  map $$\alpha_q: H^q_! (Y_0, \tilde{L}(k-2, E/\Oo))^2 \to H^q_!(Y_1, \tilde{L}(k-2, E/\Oo))$$ is injective. \end{thm}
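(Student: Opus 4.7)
I would treat the two cases separately, since only $q=1$ requires the arithmetic hypotheses. A preliminary reduction uses that $E/\Oo = \varinjlim \Oo/\varpi^n$ and that cohomology commutes with these filtered colimits; the kernel of $\alpha_q$ on the divisible $\Oo$-module $H^q_!(Y_0, \tilde{L}(k-2, E/\Oo))^2$ is itself divisible and therefore vanishes iff its $\varpi$-torsion does. Via the Bockstein sequence associated to
\begin{equation*}
0\to \tilde{L}(k-2,\bfF)\to \tilde{L}(k-2,E/\Oo)\xrightarrow{\varpi}\tilde{L}(k-2,E/\Oo)\to 0
\end{equation*}
with $\bfF = \Oo/\varpi$, this reduces, modulo the image of the relevant $H^{q-1}$, to the injectivity of $\alpha_q$ with $\bfF$-coefficients.

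For $q=1$ I would follow Diamond's weight-reduction method of \cite{Diamond91}. Under the hypotheses $\ell > k-2$ and $\ell \nmid N(\fN)\#(\OK/\fN\OK)^{\times}$, the $\bfF$-representation $L(k-2,\bfF)$ of the level-$\fN\fp^n$ subgroup embeds as a direct summand of an induced representation $\Ind_{K'}^{K_n}\bfF$ coming from a deeper compact open subgroup $K'\subseteq K_n$, which can be chosen compatibly for $n=0,1$. Shapiro's lemma then yields an embedding $H^1(Y_n, \tilde{L}(k-2,\bfF))\hookrightarrow H^1(Y'_n,\bfF)$ that is equivariant for the two $\fp$-degeneracy maps. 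Injectivity of $\alpha_1$ thus reduces to the weight-two statement for $Y'$ with trivial coefficients, which is \cite[Theorem 2]{Klosin08}.

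For $q=2$ I would invoke Poincar\'e duality on the 3-manifold $Y_n$ together with the Blasius--Franke--Grunewald description of cuspidal cohomology. Self-duality of $L(k-2)$ provides a perfect pairing
\begin{equation*}
H^1_!(Y_n,\tilde{L}(k-2,\Oo))\times H^2_!(Y_n,\tilde{L}(k-2,E/\Oo))\longrightarrow E/\Oo,
\end{equation*}
under which the two pullbacks $\pi_i^*$ are adjoint to the trace maps $\pi_{i,*}$. Pontryagin duality then converts the vanishing of the divisible module $\ker\alpha_2$ into the vanishing of the torsion-free quotient of $\coker(\pi_{1,*},\pi_{2,*})$, i.e., into surjectivity of $(\pi_{1,*},\pi_{2,*})$ after $\otimes_\Oo E$. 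The latter is Pontryagin-dual (over $E$) to the injectivity of $\alpha_1$ with $E$-coefficients, which follows from Blasius--Franke--Grunewald's identification of $H^1_!(Y_n,\tilde{L}(k-2,E))$ with a space of cusp forms combined with strong multiplicity one for $\GL_2(F)$. Both inputs hold without any further hypothesis, which is why the $q=2$ statement is unconditional.

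The main obstacle is the $q=1$ case: ensuring that the Shapiro's lemma embedding supplied by Diamond's method is genuinely equivariant for the two $\fp$-degeneracy maps, which forces a careful and compatible choice of the auxiliary cover $Y'$ at levels $\fN$ and $\fN\fp$. The hypothesis $\ell \nmid \#(\OK/\fN\OK)^{\times}$ is precisely what provides the semisimplicity needed to realize $L(k-2,\bfF)$ as a canonical direct summand, rather than merely a subquotient, of the induced module $\Ind_{K'}^{K_n}\bfF$; without it the Shapiro's-lemma transfer need not commute with the $\fp$-degeneracy maps, and one cannot cleanly fall back on the weight-two result of \cite{Klosin08}.
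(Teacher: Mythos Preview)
Your outline for $q=2$ has a genuine gap. You assert a perfect pairing
\[
H^1_!(Y_n,\tilde{L}(k-2,\Oo))\times H^2_!(Y_n,\tilde{L}(k-2,E/\Oo))\longrightarrow E/\Oo
\]
and then use Pontryagin duality to convert injectivity of $\alpha_2$ on $E/\Oo$-coefficients into a characteristic-zero statement about trace maps. But the cup-product pairing between $H^1_!$ and $H^2_!$ is only perfect \emph{modulo torsion} (this is Urban's Th\'eor\`eme~2.5.1, and the paper remarks on exactly this point in Section~\ref{The module of congruences}). Since $H^2_!(Y_n,\tilde{L}(k-2,\Oo))$ can and does have torsion, the identification $(\ker\alpha_2)^\vee \cong \coker(\pi_{1,*},\pi_{2,*})$ fails, and your reduction to the $E$-coefficient statement breaks down. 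This is precisely the obstruction that prevents one from deducing level-raising, and it equally blocks your proposed proof of injectivity. The paper circumvents this by never dualizing: it uses the Lyndon exact sequence for the amalgam $\Gamma'_{-1,j} = \Gamma'_{0,j} *_{\Gamma'_{1,j}} (\Gamma'_{0,j})^{\fp}$, reducing injectivity of $\alpha_2$ to the vanishing of the restriction $H^2(\Gamma'_{-1,j},L(k-2,E/\Oo))\to H^2(\Gamma'_{0,j},L(k-2,E/\Oo))$, and then obtains that vanishing from Blasius--Franke--Grunewald on $E$-coefficients together with ${\rm cd}(\Gamma'_{0,j})\le 2$ (so $H^3$ with $\Oo$-coefficients vanishes) and a splitting argument.

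For $q=1$ your plan is a different route from the paper's and is not clearly justified as stated. The paper does follow Diamond, but not by a Shapiro's-lemma weight reduction to trivial coefficients; rather it uses the amalgam sequence to reduce to the vanishing of $H^1_{S_j}(\Gamma_{\fN,j},L(k-2,\bfF))$, then an inflation--restriction sequence with kernel $\Gamma_{\fN\ell,j}$ to split this into $H^1_Q(\SL_2(\OF/\ell),L(k-2,\bfF))$ (handled by an explicit computation, Lemma~\ref{ST}) and a term at deep level where the coefficients become trivial and Serre's result on $\SL_2$ over rings of $S$-integers applies. Your claim that $L(k-2,\bfF)$ is a \emph{direct summand} of some $\Ind_{K'}^{K_n}\bfF$ under the stated hypotheses is not substantiated: the hypothesis $\ell\nmid\Phi(\fN)$ controls the index $[K_0:\Gamma_j(\fN)]$, not the order of a finite quotient $K_n/K'$ through which the $\GL_2$-action on $L(k-2,\bfF)$ would factor, so the semisimplicity you need is not available, and the compatibility with both degeneracy maps that you flag as the ``main obstacle'' remains unresolved.
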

The method applied in  \cite{Klosin08} for $q=1$ used modular symbols and relied on the assumption that in this case the sheaf $\tilde{L}(k-2, E/\Oo)$ is constant. To treat the case of a general weight we use an approach developed by Diamond \cite{Diamond91} for the $\bfQ$-case and we extend it here to the case of $F$ using some input from \cite{SengunTurkelli09}. This is carried out in section \ref{sectionI2}. For $q=2$ we use results of Blasius, Franke and Grunewald \cite{BlasiusFrankeGrunewald94} on the vanishing of the restriction map from group cohomology of $S$-arithmetic groups to group cohomology of arithmetic groups. This is carried out in section \ref{sectionI1}.

In \cite{Ribet84} and \cite{Diamond91} Ihara's Lemma was used to show the existence of level-raising congruences, i.e., congruences  between modular forms of level $N$ and those of level $Np$. Our results however cannot be used to prove such a result. The problem is the occurrence of torsion in the degree two cohomology which prevents a  `lifting' of our result for cohomology with torsion coefficients to a statement about lattices generated by eigenforms in the spaces of automorphic forms. It is also not possible to conclude a level-raising result on the level of cohomology itself because the cup product pairing is only perfect modulo torsion hence preventing us from using the standard technique of composing $\alpha_q$ with its adjoint $\alpha_q^+$ and relating the order of $\ker(\alpha_q^+\alpha_q)$ to the order of the congruence module. The relationship of our results to the problem of level-raising is discussed in detail in section \ref{The module of congruences}.

On the other hand a level-raising result for torsion cohomology classes with trivial coefficients 
 has recently been obtained by Calegari and Venkatesh \cite{CalegariVenkateshbook}. As noted in [loc.cit.] the cohomology classes of `raised' level constructed in [loc.cit.] do not always lift to characteristic zero hence in fact a level-raising result of the type proved in \cite{Ribet84} and \cite{Diamond91} is not to be expected in the context of automorphic forms over imaginary quadratic fields. We would like thank Frank Calegari and Akshay Venkatesh for sending us an early version of their book.

\section{Preliminaries} \label{Preliminaries}
\subsection{The congruence subgroups of $\GL_2(F)$ and symmetric spaces} Let $F$ be an imaginary quadratic extension 
of $\quf$ and denote by $\OF$ its ring of integers. Fix once and for all an embedding $\ov{F} \hookrightarrow \bfC$. For any ideal $\fM\subset \OF$ we will write $\Phi(\fM)$ for the integer $N(\fM) \cdot \#(\OF/\fM)^{\times}$, where $N$ denotes the absolute norm. Let $\fN$ be an ideal 
of 
$\OF$ such that the $\mathbf{Z}$-ideal $\fN \cap \mathbf{Z}$ has a 
generator greater than 3. Let $\fp$ be a prime ideal such that $\fp 
\nmid\fN$. Write $p$ for its residue characteristic.
Denote 
by $\Cl_F$ the class group of $F$ and choose representatives of distinct 
ideal classes to be prime ideals $\fp_j$, 
$j=1, \dots, \#\Cl_F$, relatively prime to both $\fN$ and $\fp$. Let 
$\tilde\pi$, 
(resp. $\tilde\pi_j$) be a uniformizer of the completion $F_{\fp}$ (resp. 
$F_{\fp_j}$) of $F$ at the prime $\fp$ (resp. $\fp_j$), and put 
$\pi$ (resp. $\pi_j$) to be the idele $(\dots, 1, \tilde\pi, 1, 
\dots) \in \ideleF$ (resp. $(\dots, 1, \tilde\pi_j, 1,
\dots) \in \ideleF$), where $\tilde\pi$ (resp. $\tilde\pi_j$) occurs at the $\fp$-th 
place 
(resp. $\fp_j$-th place). We will write $\eta$ for $\bmat \pi \\ & 1\emat \in \GL_2(\AFf)$.

For each $n\in \mathbf{Z}_{\geq 0}$, we define compact open 
subgroups of $\GL_2(\adeleFf)$
$$K_n:= \left\{\bmat a&b \\ c&d \emat \in \prod_{\fq \nmid \iy} 
\GL_2(\OFq) \mid c \in \fN \fp^n \right\}.$$
Here $\adeleFf$ denotes the finite adeles of $F$ and $\OFq$ the 
ring of integers of $F_{\fq}$. For $n \geq 0$ we 
also set $K_n^{\fp} 
= \eta^{-1} K_n \eta$.

For any compact open 
subgroup $K$ of $\GL_2(\adeleFf)$ we put $Y_K = \GL_2(F) \setminus 
\GL_2(\adeleF)/ K \cdot U_2(\mathbf{C}) \cdot Z_{\iy}$, 
where $Z_{\iy} = \mathbf{C}^{\times}$ is the center of 
$\GL_2(\mathbf{C})$ and $U(2):= \{M \in \GL_2(\mathbf{C}) \mid M \ov{M}^t 
= 
I_2\}$ (here `bar' denotes complex conjugation and $I_2$ stands for the $2 
\times 
2$-identity matrix). If $K$ is sufficiently large (which will be the 
case for all compact open subgroups we will consider) this space is a 
disjoint union of $\#\Cl_F$ connected 
components $Y_K = \coprod_{j=1}^{\# \Cl_F} (\Gamma_K)_j \setminus 
\mathcal{Z}$, where $\mathcal{Z} = \GL_2(\mathbf{C})/U_2(\mathbf{C}) 
\mathbf{C}^{\times}$ and $(\Gamma_K)_j = \GL_2(F) \cap \bmat \pi_j\\ 
&1 
\emat 
K \bmat \pi_j\\ &1 \emat^{-1}$. To ease notation we put $Y_n := 
Y_{K_n}$, $Y_n^{\fp} := Y_{K_n^{\fp}}$, $\G_{n,j}:= (\G_{K_n})_j$ and 
$\G^{\fp}_{n,j}:= (\G_{K_n^{\fp}})_j$.

We have the following diagram:
\be \label{diagramK} \xy*!C\xybox{\xymatrix{\dots \ar[r]^{\subset} 
&K_{n+1}\ar[r]^{\subset}\ar[dr]^{\subset}&
K_n \ar[r]^{\subset} \ar[dr]^{\subset} & K_{n-1}\ar[r]^{\subset} & 
\dots\\
\dots \ar[r]^{\subset} &K^{\fp}_{n+1}\ar[r]^{\subset}\ar[u]^{\wr}&
K^{\fp}_n \ar[r]^{\subset} \ar[u]^{\wr} & 
K^{\fp}_{n-1}\ar[r]^{\subset} \ar[u]^{\wr}&
\dots
  }}\endxy \ee
where the horizontal and diagonal arrows are inclusions and the 
vertical arrows are conjugation by $\eta$. Diagram (\ref{diagramK}) is not commutative, but it is 
``vertically commutative'', by which we mean that given two objects in the 
diagram, two directed paths between those two objects define the same map 
if and only if the two paths contain the same number of vertical arrows.  

Diagram (\ref{diagramK}) induces the following vertically commutative 
diagram of the corresponding symmetric spaces:
\be \label{diagramX}
\xy*!C\xybox{\xymatrix{\dots \ar[r] 
&Y_{n+1}\ar[r]\ar[dr]&
Y_n \ar[r] \ar[dr] & Y_{n-1}\ar[r] & 
\dots\\
\dots \ar[r] &Y^{\fp}_{n+1}\ar[r]\ar[u]^{\wr}&
Y^{\fp}_n \ar[r] \ar[u]^{\wr} & 
Y^{\fp}_{n-1}\ar[r] \ar[u]^{\wr}&
\dots
  }}\endxy \ee

The horizontal and diagonal arrows in diagram (\ref{diagramX}) are the 
natural 
projections and the vertical arrows are maps given by $(g_{\iy}, g_f) 
\mapsto \left(g_{\iy}, g_f \eta \right)$.

\subsection{Cohomology} 
Let $M$ be a finitely generated
 $\bfZ$-module with $M^{\rm tor}$ of exponent relatively prime to 
$\#\OF^{\times}$ endowed with a $\GL_2(F)$-action. Denote by $\tilde{M}_K$ 
the sheaf of continuous sections of the 
topological covering $\GL_2(F)\setminus [(\GL_2(\adeleF)/K 
\cdot U_2(\mathbf{C}) \cdot 
Z_{\iy})\times M] \rightarrow Y_K$, where $\GL_2(F)$ acts diagonally on 
$(\GL_2(\adeleF)/K U_2(\mathbf{C})
Z_{\iy})\times M$. Here $M$ is equipped with the discrete topology. 
As above, we put $\tilde{M}_n:= \tilde{M}_{K_n}$ and 
$\tilde{M}^{\fp}_n:=\tilde{M}_{K^{\fp}_n}$.

Given a surjective map $\phi: Y_K \rightarrow Y_{K'}$, we get an 
isomorphism of 
sheaves 
$\phi^{-1}\tilde{M}_{K'} \xrightarrow{\sim} \tilde{M}_K$, which yields a 
map on 
cohomology $$H^q(Y_{K'}, \tilde{M}_{K'}) \rightarrow H^q(Y_{K}, 
\phi^{-1}\tilde{M}_{K'}) \cong H^q(Y_K, \tilde{M}_{K}).$$ Hence diagram 
(\ref{diagramX}) gives rise to a vertically commutative diagram of 
cohomology groups:
\be \label{diagramc} \xymatrix@C4em@R5em{\dots 
&H^q(Y_{n+1}, \tilde{M}_{n+1})\ar[l]\ar[d]_{\wr}^{\a_{\fp}^{n+1}}&
H^q(Y_n,\tilde{M}_n) \ar[l]_{\a_1^{n,n+1}} \ar[d]_{\wr}^{\a_{\fp}^{n}}& 
H^q(Y_{n-1}, \tilde{M}_{n-1})\ar[l]_{\a_1^{n-1,n}}\ar[d]_{\wr}^{\a_{\fp}^{n-1}}&
\dots\ar[l]\\
\dots &H^q(Y^{\fp}_{n+1}, 
\tilde{M}^{\fp}_{n+1})\ar[l]&
H^q(Y^{\fp}_n, \tilde{M}^{\fp}_n) \ar[l]_{\a_1^{n\fp, (n+1)\fp}} \ar[ul]_{\a_1^{n, (n+1)\fp}}
 &
H^q(Y^{\fp}_{n-1}, \tilde{M}^{\fp}_{n-1})\ar[l]_{\a_1^{(n-1)\fp, n\fp}} \ar[ul]_{\a_1^{n-1, 
n\fp}}
&
\dots\ar[l]
  } \ee

These sheaf cohomology groups can be related to the group cohomology 
of $\G_{n,j}$ and $\G_{n,j}^{\fp}$ with coefficients in $M$. In fact, for 
each compact open 
subgroup $K$ with corresponding decomposition $Y_K = \coprod_{j=1}^{\# 
\Cl_F} 
(\Gamma_K)_j \setminus
\mathcal{Z}$, we have the following 
commutative diagram in which the horizontal maps are inclusions:
\be\label{diagramgc}\xy*!C\xybox{\xymatrix{H^q_!(Y_K, \tilde{M}_K) \ar[r]  
&H^q(Y_K, 
\tilde{M}_K) \\
\bigoplus_{j=1}^{\#\Cl_F} H^q_P((\G_K)_j, M) \ar[r] \ar[u]& 
\bigoplus_{j=1}^{\#\Cl_F} H^q((\G_K)_j, M) \ar[u]}}\endxy\ee 
Here $H^q_!(Y_K, \tilde{M}_K)$ denotes the image of the cohomology 
with compact support $H^q_c(Y_K, \tilde{M}_K)$ inside $H^q(Y_K, 
\tilde{M}_K)$ and $H^q_P$ denotes the parabolic cohomology, i.e., 
$H^q_P((\G_K)_j, M):= \ker(H^q((\G_K)_j, M)\rightarrow 
\bigoplus_{B \in \mB_j} H^q((\G_K)_{j,B}, M))$, where $\mB_j$ is a 
fixed set of representatives of $(\G_K)_j$-conjugacy classes of 
Borel subgroups of 
$GL_2(F)$ and $(\G_K)_{j,B}:= (\G_K)_{j} \cap B$. The vertical 
arrows in diagram (\ref{diagramgc}) are isomorphisms provided that there 
exists a torsion-free normal 
subgroup of $(\G_K)_j$ of finite index relatively prime to 
the exponent of $M^{\rm tor}$. If $K=K_n$ or $K=K_n^{\fp}$, $n\geq 0$, this 
condition is satisfied because of our assumption that $\fN \cap 
\mathbf{Z}$ has a generator 
greater than 3 and the exponent of $M^{\rm tor}$ is relatively prime to $\# 
\OF^{\times}$ (cf. \cite{Urban95}, 
section 2.3). In what follows 
we may therefore identify the sheaf cohomology with the group cohomology. 
Note that all maps in 
diagram (\ref{diagramc}) preserve parabolic cohomology. The maps 
$\a_1^{*,*}$ are the natural restriction maps on group cohomology, so 
in 
particular they preserve the decomposition $\bigoplus_{i=1}^{\#\Cl_F} 
H^q((\G_K)_j, M)$.  

Let us introduce one 
more 
group:
$$K_{-1}:= \left\{\bmat a&b \\ c&d \emat \in \GL_2(F_{\fp})\times 
\prod_{\fq
\nmid \fp \iy}
\GL_2(\OFq) \mid c \in \fN , \hs ad-bc \in \prod_{\fq \nmid \iy}
\OFq^{\times}
\right\}.$$
The group $K_{-1}$ is not compact, but we can still define 
$$\G_{-1,j}:= 
\GL_2(F) \cap \bmat \tilde{\pi}_j \\ &1 \emat K_{-1} \bmat \tilde{\pi}_j
\\ 
&1 \emat^{-1}$$
for $j=1, \dots \#\Cl_F$. Note that $\Gamma_{-1,j}$ are not discrete subgroups of $\SL_2(\bfC)$. They are commensurable with an $S$-arithmetic subgroup (in the sense of \cite{Serre70}) of $\SL_2(F)$ where $S=\{\fp\}$.  However it still makes sense to define the group  cohomology groups $H^q(\Gamma_{-1,j},M)$ as well as the subgroups of parabolic cohomology $H^q_P(\Gamma_{-1,j}, M)$. 

The sheaf and group cohomologies are in a natural way modules over the 
corresponding Hecke 
algebras. (For the definition of the Hecke action on cohomology, see 
\cite{Urban95} or \cite{Hida93}). Here we will only consider the 
subalgebra 
$\mathbf{T}_{n,\bfZ}$ of the full Hecke algebra which is generated over $\bbf{Z}$ 
by the 
double cosets $T_{\fp'}:=K \bmat\pi'\\
&1\emat K$ 
for $\pi'$ 
a uniformizer of $F_{\fp'}$ with $\fp'$ running over prime ideals of $\OF$ 
such that 
$\fp' \nmid \fN \fp$. For a $\bfZ$-algebra $A$ we set $\bfT_{n,A}:= \bfT_{n, \bfZ} \otimes_{\bfZ} A$.

\subsection{The sheaf and cup product pairing} \label{The Eichler 1}

Let $k\geq 2$.
Let $\ell$ be a prime.
Fix an isomorphism $\ov{\bfQ}_{\ell} \cong \bfC$. Let $E\subset \ov{\bfQ}_{\ell}$ be an (always sufficiently large) finite extension of 
$\bfQ_{\ell}$. In particular we assume that $E$ contains $F$ and all the eigenvalues of all Hecke eigenforms for a fixed $k$ and level (this is possible since the extension of $\bfQ$ generated by these eigenvalues is a number field - cf. e.g., Theorem A in \cite{Taylor94}).
Write $\Oo$ for the valuation ring of $E$, $\varpi$ for a choice of a uniformizer and $\bfF$ for the residue field. 

Let $A$ be an $\OF$-algebra. For an integer $m\geq 0$, write $\Sym^m (A)$ for the ring of homogeneous polynomials in two variables of degree $m$ with coefficients in $A$. For a  subgroup $\Gamma \subset \GL_2(F)$, $\gamma =\bmat a&b \\ c&d\emat \in \Gamma$ and $P(X,Y) \in \Sym^m(A)$ we set $(\gamma P)(X,Y) := P(aX+cY, bX+dY)$. If $P(aX+cY, bX+dY) \in A[X,Y]$ for every $\gamma \in \Gamma$, this defines an action of $\Gamma$ on $\Sym^m (A)$. Set $$L(m,A) =  \Sym^m(A) \otimes_{\OF} \Sym^m(A),$$ where the second factor is an $\OF$-module via the non-trivial automorphism $a \mapsto \ov{a}$ of $F$. Set $\gamma (P\otimes Q) := \gamma P \otimes \ov{\gamma} Q$. Put $$L(m,M):= L(m, A) \otimes_{A} M$$ for any $A$-module $M$. If $M$ is an $A$-algebra, then $L(m,M)$ is a ring.

 Now assume that $\ell>k-1$ is a prime not dividing $N(\fN\fp)D_F\#\OF^{\times}$, where $D_F$ is the discriminant of $F$. Moreover, assume that $A$ is an $\OF$-algebra in which all rational primes $q \leq m$ are invertible (cf. \cite{Ghate02}, section 3.3).
 For a sheaf $\tilde{M}$ of $A$-modules we set $\tilde{M}^0 = \Hom(\tilde{M}, A)$ (cf. \cite{Urban98}, p.288). Ubran ([loc.cit], p.299) shows that it is possible to view $\tilde{L}(m,A)^0$ as a subsheaf of $\tilde{L}(m,A)$. Indeed, if $\Sym^m (A)$ is endowed with an action of a congruence subgroup $\Gamma \subset \GL_2(F)$, one defines (cf. e.g. \cite{Ghate02}, section 3.3) a natural pairing $[\cdot , \cdot]_{m}$ on $\Sym^m (A) \otimes \Sym^m (A) \to A$ by $$\left[\sum_{i=0}^{m} a_i X^i Y^{m-i}, \sum_{i=0}^{m} b_i X^i Y^{m-i}\right]_m:= \sum_{i=0}^m (-1)^i \frac{a_i b_{m-i}}{{m \choose i}}.$$ Then we can define a $\Gamma$-equivariant pairing on $L(m,A) \otimes L(m, A) \to A$ by $[P\otimes P', Q\otimes Q']_m:= [P,Q]_m [P',Q']_m$. 
Moreover, under our assumptions the pairing $[\cdot, \cdot]_m$ is perfect and we get an isomorphism $L(k-2, A)^0 \cong L(k-2,A)$ of $\Gamma$-modules. 
In particular 
in all the statements below one can replace $\tilde{L}(k-2, A)^0$ with $\tilde{L}(k-2,A)$.

\begin{thm}[Urban, \cite{Urban98}, Th\'eor\`eme 2.5.1] \label{2.5.1} 
If $\ell$ is a prime such that $\ell > k-1$ and $\ell \nmid N(\fN\fp)D_F\#\OF^{\times}$  then 
there exists a pairing (induced by the cup product):
$$J_n: H^1_!(Y_n, \tilde{L}(k-2,\Oo)) \otimes H^2_!(Y_n, \tilde{L}(k-2,\Oo)^0) \to H^3_c(Y_n, \Oo) \cong \Oo,$$ which is perfect modulo torsion.  \end{thm}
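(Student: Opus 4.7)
The plan is to deduce the theorem from Poincaré--Lefschetz duality applied to the orientable real 3-manifold underlying $Y_n$, combined with the self-duality of $L(k-2,\Oo)$ under the Ghate pairing. Each connected component $\G_{n,j}\setminus\mathcal{Z}$ is a quotient of hyperbolic $3$-space, and the standing hypothesis that $\fN\cap\mathbf{Z}$ has a generator $>3$ together with $\ell\nmid\#\OF^{\times}$ ensures the existence of a torsion-free normal subgroup of $\G_{n,j}$ of finite index prime to $\ell$. Consequently $Y_n$ is cohomologically an orientable $3$-manifold, sheaf cohomology matches group cohomology, and integral Poincaré--Lefschetz duality applies with $\Oo$-coefficients.

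The first step is to use the pairing $[\cdot,\cdot]_{k-2}$ of Section \ref{The Eichler 1} to exhibit a $\GL_2(F)$-equivariant perfect pairing $L(k-2,\Oo)\otimes L(k-2,\Oo)^0\to\Oo$; the hypotheses $\ell>k-1$ and $\ell\nmid N(\fN\fp)D_F\#\OF^{\times}$ make every binomial denominator $\binom{k-2}{i}$ an $\ell$-adic unit, so this pairing is integrally perfect. Sheafifying and composing with cup product yields an $\Oo$-bilinear pairing
\begin{equation*}
H^1_c(Y_n,\tilde L(k-2,\Oo))\otimes H^2(Y_n,\tilde L(k-2,\Oo)^0)\longrightarrow H^3_c(Y_n,\Oo),
\end{equation*}
and its variant with compact supports switched. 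Since $Y_n$ is a finite disjoint union of connected, oriented, open $3$-manifolds, the fundamental class trivializes $H^3_c(Y_n,\Oo)\cong\Oo^{\#\Cl_F}$ (and we post-compose with the summation map to land in $\Oo$). Poincaré--Lefschetz duality with local coefficients then asserts that this cup-product pairing is perfect modulo torsion, the torsion defect being controlled by the $\operatorname{Ext}^1_{\Oo}$ terms of the universal coefficient theorem, which are torsion by construction.

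The second step is to descend the pairing to interior cohomology. By definition $H^q_!=\image(H^q_c\to H^q)$; using a Borel--Serre compactification $\overline Y_n$ with boundary $\partial\overline Y_n$, one has the long exact sequence
\begin{equation*}
\cdots\to H^{q-1}(\partial\overline Y_n,\tilde L)\to H^q_c(Y_n,\tilde L)\to H^q(Y_n,\tilde L)\to H^q(\partial\overline Y_n,\tilde L)\to\cdots,
\end{equation*}
and Poincaré--Lefschetz duality exchanges $H^q_c$ with $H^{3-q}$ (in dual coefficients) while exchanging the kernel of $H^q_c\to H^q$ with the image of $H^{3-q}_c\to H^{3-q}$. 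Thus, modulo torsion, $\ker(H^1_c\to H^1)$ annihilates $\image(H^2_c\to H^2)=H^2_!$ and vice versa, so the cup product factors to a pairing $J_n:H^1_!\otimes H^2_!\to\Oo$ that inherits perfection modulo torsion from the full duality statement.

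The main obstacle is precisely this last descent: integrally, the cokernel of $H^q_c\to H^q$ and the image $H^q_!$ can carry torsion that does \emph{not} vanish under the pairing, so one must carefully verify that every discrepancy between the ambient Poincaré duality and the induced pairing on $H^1_!\otimes H^2_!$ lives in $\Oo$-torsion. This reduces to showing that the boundary terms $H^q(\partial\overline Y_n,\tilde L)$ are mutually annihilating (mod torsion) under the duality restricted from $Y_n$, which in turn follows from the explicit structure of $\partial\overline Y_n$ as a disjoint union of $2$-tori together with the self-duality of $L(k-2,\Oo)$ restricted to the Borel subgroups---essentially a rank argument in the Künneth decomposition of the boundary cohomology.
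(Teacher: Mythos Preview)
The paper does not prove this statement; it is quoted from Urban (\cite{Urban98}, Th\'eor\`eme~2.5.1) and used as a black box. There is therefore nothing in the paper to compare your argument against.

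That said, your outline is the standard route and is essentially what underlies Urban's result: perfectness of the coefficient pairing $[\cdot,\cdot]_{k-2}$ over $\Oo$ (this is where $\ell>k-1$ enters, via the binomial denominators), Poincar\'e--Lefschetz duality for the orientable hyperbolic $3$-manifold $Y_n$ with local coefficients, and descent from the $H^q_c\times H^{3-q}$ pairing to $H^1_!\times H^2_!$ using the Borel--Serre boundary exact sequence. Two small remarks. First, your observation that $H^3_c(Y_n,\Oo)\cong\Oo^{\#\Cl_F}$ rather than $\Oo$ is correct; the isomorphism in the statement is after summing over components, as you say. Second, the ``main obstacle'' paragraph is where the real work lies: showing that the induced pairing on interior cohomology is still perfect modulo torsion requires a genuine argument (in Urban's treatment this goes through an analysis of the boundary cohomology and a half-dimensionality/isotropy statement for the image of the restriction map to the boundary), and your final sentence gestures at this without carrying it out. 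As a sketch this is fine, but if you intend it as a self-contained proof you would need to make that step precise.
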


\begin{lemma} \label{tfH1} If $A \subset \bfC$, 
and $\ell$ is any prime such that $\ell >k-2$ 
then $H^1_!(Y_n, \tilde{L}(k-2,A))$ is torsion-free. \end{lemma}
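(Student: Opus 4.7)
The plan is to reduce the statement to a vanishing of $\Gamma_{n,j}$-invariants via the Bockstein associated to multiplication by $\ell$, and then exploit irreducibility of the $\Sym^{k-2}$ representation both in characteristic zero and mod $\ell$.

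First I would observe that $H^1_!(Y_n,\tilde L(k-2,A))\hookrightarrow H^1(Y_n,\tilde L(k-2,A))$ by definition, so it suffices to show that $H^1(Y_n,\tilde L(k-2,A))$ has no $\ell$-torsion for every prime $\ell>k-2$. Since $A\subset\bfC$ is $\bfZ$-torsion-free, and since $L(k-2,\OF)$ is free over $\OF$, the functor $L(k-2,-)=L(k-2,\OF)\otimes_{\OF}-$ preserves exactness, so the sequence $0\to A\xrightarrow{\ell}A\to A/\ell A\to 0$ produces a short exact sequence of sheaves
$$0\to \tilde L(k-2,A)\xrightarrow{\ell}\tilde L(k-2,A)\to \tilde L(k-2,A/\ell A)\to 0.$$
The Bockstein portion of its long exact sequence identifies $H^1(Y_n,\tilde L(k-2,A))[\ell]$ with the cokernel of the natural map $H^0(Y_n,\tilde L(k-2,A))/\ell\to H^0(Y_n,\tilde L(k-2,A/\ell A))$; via the identification with group cohomology from Section~\ref{Preliminaries}, this becomes the analogous map of $\Gamma_{n,j}$-invariants summed over components.

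For $k=2$ the sheaf is constant with stalk $A$, both $H^0$'s are copies of $A$ and $A/\ell A$, and the Bockstein map is the obvious surjection $A/\ell\to A/\ell A$, so the cokernel vanishes. For $k>2$ I would show both invariant modules are separately zero. In characteristic zero this follows from Borel density: the diagonal map $\gamma\mapsto(\gamma,\bar\gamma)$ realises $\Gamma_{n,j}$ as an irreducible lattice in the semisimple group $\SL_2(\bfC)\times\SL_2(\bfC)$, hence Zariski dense; and $L(k-2,\bfC)$ is the external tensor product of two nontrivial irreducible representations, whose $\SL_2(\bfC)\times\SL_2(\bfC)$-invariants vanish, forcing $L(k-2,A)^{\Gamma_{n,j}}\subset L(k-2,\bfC)^{\Gamma_{n,j}}=0$.

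For the mod-$\ell$ invariants $L(k-2,A/\ell A)^{\Gamma_{n,j}}$, the hypothesis $\ell>k-2$ is exactly what is needed to keep $\Sym^{k-2}$ a nontrivial irreducible representation of $\SL_2$ in characteristic $\ell$. By strong approximation, $\Gamma_{n,j}$ surjects onto $\SL_2(\OF/\ell)$ (at least when $\ell$ is coprime to the level $\fN\fp^n$, as is implicit in the intended applications), and by the Steinberg tensor product theorem $\Sym^{k-2}\otimes\overline{\Sym^{k-2}}$ is a nontrivial irreducible $\SL_2(\OF/\ell)$-representation, whose invariants vanish. The main obstacle is precisely this mod-$\ell$ step: the structure of $\SL_2(\OF/\ell)$ depends on whether $\ell$ is split, inert, or ramified in $F$ (giving $\SL_2(\bfF_\ell)\times\SL_2(\bfF_\ell)$ versus $\SL_2(\bfF_{\ell^2})$ versus a degenerate group), and the Steinberg argument has to be made uniform across these cases, with the ramified and level-dividing situations sidestepped by the coprimality hypotheses always present in our applications.
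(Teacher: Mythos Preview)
Your approach is sound in outline but differs from the paper's. The paper uses the short exact sequence $0\to L(k-2,A)\to L(k-2,\bfC)\to L(k-2,\bfC/A)\to 0$ rather than the multiplication-by-$\ell$ sequence; the associated long exact sequence embeds $H^1(\Gamma_{n,j},L(k-2,A))$ into the $\bfC$-vector space $H^1(\Gamma_{n,j},L(k-2,\bfC))$ as soon as one knows the $H^0$ sequence is short exact, and the paper reduces this (for $k>2$) to the single vanishing $L(k-2,\bfC/A)^{\Gamma_{n,j}}=0$, citing $\ell>k-2$ and a lemma of \c{S}eng\"un--T\"urkelli. The payoff of the paper's route is that it avoids both the strong-approximation input and the split/inert/ramified trichotomy you flag at the end; your route, on the other hand, makes the role of the hypothesis $\ell>k-2$ completely transparent (it is exactly what keeps $\Sym^{k-2}$ irreducible mod $\ell$).

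Two caveats on your write-up. First, the claim that $\gamma\mapsto(\gamma,\bar\gamma)$ realises $\Gamma_{n,j}$ as a lattice in $\SL_2(\bfC)\times\SL_2(\bfC)$ is false: $F$ is imaginary quadratic, so there is only one archimedean place and $\Gamma_{n,j}$ is a lattice in $\SL_2(\bfC)$, with image contained in the totally real subgroup $\{(g,\bar g)\}$. What you actually need---and what Borel density does give---is Zariski density of $\Gamma_{n,j}$ in $\Res_{\bfC/\bfR}\SL_2$, whose complexification is $\SL_2\times\SL_2$; the vanishing of invariants then follows as you say. Second, the strong-approximation step in your mod-$\ell$ argument genuinely needs $\ell\nmid\fN\fp^n$, a hypothesis absent from the lemma as stated; so as written you prove a slightly weaker statement than the paper claims (though, as you note, one sufficient for the applications). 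A more elementary alternative to strong approximation here is to use upper and lower unipotents in $\Gamma_{n,j}$ directly: for $\ell>k-2$ a nontrivial unipotent acts on $\Sym^{k-2}(\bfF_\ell)$ as a single Jordan block, and intersecting the two one-dimensional invariant spaces gives zero.
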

\begin{proof} The exact sequence $0 \to L(k-2,A) \to L(k-2,\bfC) \to L(k-2,\bfC/A) \to 0$ induces a long exact sequence in (group) cohomology $$\bigoplus_j H^0(\Gamma_{n,j}, L(k-2,\bfC/A)) \to \bigoplus_j  H^1(\Gamma_{n,j},L(k-2, A)) \to \bigoplus_j  H^1(\Gamma_{n,j}, L(k-2,\bfC)).$$ To prove the claim it is enough to show that that the sequence of $H^0$-groups is short exact. This follows from the assumption $\ell > k-2$ (cf. also \cite{SengunTurkelli09}, Lemma 6.2).
\end{proof}

\subsection{Relation to automorphic forms} \label{Eichler 2}

 For an abelian group $M$ we set $M^{\rm tf} = M/{\rm torsion}$. 

 Let $K \subset \GL_2(\AFf)$ be an open compact subgroup.
We will write $\mS_{k}(K)$ for the $\bfC$-space of cuspidal automorphic forms on $\GL_2(\AF)$ of (parallel) weight $k$ and level $K_n$. We will not need the precise definition in what follows, but we refer the reader to \cite{Urban98}, section 3 for details. 

 \begin{thm} [Eichler-Shimura-Harder isomorphism] \label{ESH} There exist Hecke-equivariant isomorphisms (cf. e.g.\cite{Urban98}, section 3):
$$\delta_q : \mS_{k}(K) \xrightarrow{\sim} H^q_!(Y_K, \tilde{L}(k-2,\bfC)), \quad q\in \{1,2\}.$$
\end{thm}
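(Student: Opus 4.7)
The plan is to follow the standard route via Matsushima's formula and a calculation of $(\mathfrak{g},K_\infty)$-cohomology at the archimedean place, adapted to the imaginary quadratic situation where $\mathcal{Z}=\GL_2(\mathbf{C})/U_2(\mathbf{C})\mathbf{C}^\times$ is (a product of copies of) hyperbolic $3$-space.

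First I would replace interior (image) cohomology by square-integrable cohomology. Because $\GL_2/F$ has $F$-rank one, the Borel--Casselman comparison together with Franke's work on Eisenstein cohomology identifies
$$H^q_!(Y_K,\tilde L(k-2,\mathbf{C}))\;\cong\;H^q_{(2),\mathrm{cusp}}(Y_K,\tilde L(k-2,\mathbf{C})),$$
so the only contributions in degrees $q=1,2$ come from cuspidal automorphic representations (Eisenstein classes live only in degrees involving the boundary, and their images in interior cohomology vanish in our weight range). Matsushima's formula then yields
$$H^q_!(Y_K,\tilde L(k-2,\mathbf{C}))\;\cong\;\bigoplus_{\pi}\,\pi_f^{K}\otimes H^q(\mathfrak{g},K_\infty;\pi_\infty\otimes L(k-2,\mathbf{C})),$$
the sum running over cuspidal automorphic representations $\pi=\pi_\infty\otimes\pi_f$ of $\GL_2(\mathbf{A}_F)$ with the appropriate central character.

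Next I would carry out the archimedean computation. At the unique infinite place $\GL_2(F_\infty)=\GL_2(\mathbf{C})$, one checks that $H^q(\mathfrak{g},K_\infty;\pi_\infty\otimes L(k-2,\mathbf{C}))$ is nonzero only when $\pi_\infty$ is the (unitary) principal series with infinitesimal character matching $L(k-2,\mathbf{C})$, and in that case it is one-dimensional in each of the degrees $q=1,2$ and vanishes otherwise (Borel--Wallach, applied to $\mathrm{SL}_2(\mathbf{C})$; see Harder for the $\GL_2/F$ formulation). The representations $\pi$ contributing in this way are precisely those corresponding to cuspidal automorphic forms of parallel weight $k$, so that $\bigoplus_\pi\pi_f^K\cong\mathcal{S}_k(K)$ as Hecke modules via strong multiplicity one.

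Combining these two steps gives the $\mathbf{C}$-linear isomorphism $\delta_q\colon\mathcal{S}_k(K)\xrightarrow{\sim}H^q_!(Y_K,\tilde L(k-2,\mathbf{C}))$ for $q=1,2$, and Hecke equivariance is automatic since on both sides the $T_{\mathfrak{p}'}$-action is induced by the corresponding double-coset operator on the finite adelic factor. The main obstacle (and the reason this is genuinely a theorem rather than a formal consequence) is the archimedean $(\mathfrak{g},K_\infty)$-cohomology computation and, more delicately, verifying that the image-cohomology/cuspidal-cohomology identification holds without contamination from residual or regular Eisenstein classes; both inputs are where one cites Harder and Urban \cite{Urban98}.
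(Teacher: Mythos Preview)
The paper does not prove this theorem at all: it is stated as a known result and attributed to the literature (the citation ``cf.\ e.g.\ \cite{Urban98}, section 3'' in the statement is the entirety of the justification). So there is no ``paper's own proof'' to compare against.

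Your sketch is a reasonable outline of the standard argument one finds in Harder and in the references Urban cites, and it is essentially correct in spirit. A couple of small remarks: the identification of interior cohomology with cuspidal cohomology for $\GL_2$ over an imaginary quadratic field in the relevant degrees is usually attributed directly to Harder's analysis of the boundary (rather than to Borel--Casselman or Franke, which are overkill here), and one should be a bit careful with the phrase ``strong multiplicity one'' since what is actually used is ordinary multiplicity one for cuspidal $\GL_2$. But none of this affects the validity of your outline; for the purposes of the present paper the theorem is simply quoted as input.
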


Following Urban (\cite{Urban98}, section 4) for a fixed $k$ and a subring $A \subset \bfC$ we define the following two sets of lattices:
$$L^1_A(K_n) = \delta_1^{-1}(\iota_1(H^1_!(Y_n, \tilde{L}(k-2,A)))^{\rm tf});$$
$$L^2_A(K_n) = \delta_2^{-1}(\iota_2(H^2_!(Y_n, \tilde{L}(k-2,A)^0)^{\rm tf})),$$ where $\iota_q$ are the canonical maps on cohomology induced by the embedding $A \hookrightarrow \bfC$.

\begin{lemma} \label{basech} Let $A$ be a subring of $\bfC$. The lattices $L^q_A(K_n)$ satisfy the following properties:
\begin{itemize}
\item[(i)] one has $L^q_{\bfZ}(K_n) \otimes_{\bfZ}A \cong L^q_{A}(K_n)$ for $q=1,2$;
\item[(ii)] the lattices $L^1_A(K_n)$ and $L^2_A(K_n)$ are free $A$-modules of the same rank;
\item[(iii)] the lattices $L^1_A(K_n)$ and $L^2_A(K_n)$ are both stable under the action of the Hecke algebra $\bfT_{n,\bfZ}$.
\end{itemize}
\end{lemma}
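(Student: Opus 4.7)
The plan is to prove (iii), (ii), (i) in that order: (iii) is formal, (ii) follows easily from (i) combined with Theorem \ref{ESH}, and (i) is where the real content lies.

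For (iii), the isomorphism $\delta_q$ is Hecke-equivariant by Theorem \ref{ESH}, and the base change map $\iota_q:H^q_!(Y_n,\tilde L(k-2,A))\to H^q_!(Y_n,\tilde L(k-2,\bfC))$ is equivariant for the natural $\bfT_{n,\bfZ}$-actions on source and target. Since every Hecke operator acts $\bfZ$-linearly it preserves torsion, hence descends to the tf-quotient. Therefore $\iota_q(H^q_!(Y_n,\tilde L(k-2,A))^{\rm tf})$ is $\bfT_{n,\bfZ}$-stable, and so is its preimage $L^q_A(K_n)\subset\mS_k(K_n)$ under $\delta_q$.

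For (ii), granted (i), it suffices to check that $L^q_{\bfZ}(K_n)$ is free of finite rank over $\bfZ$ and that the two ranks agree. Each $Y_n$ admits a finite-index torsion-free cover with the homotopy type of a finite CW complex (using the hypothesis that $\fN\cap\bfZ$ has a generator greater than $3$), so $H^q_!(Y_n,\tilde L(k-2,\bfZ))$ is finitely generated over $\bfZ$; consequently $L^q_{\bfZ}(K_n)$ is a finitely generated torsion-free $\bfZ$-module, hence free. Applying (i) with $A=\bfC$ gives $L^q_{\bfZ}(K_n)\otimes_{\bfZ}\bfC\cong L^q_{\bfC}(K_n)=\mS_k(K_n)$, so for both $q=1,2$ the rank equals $\dim_{\bfC}\mS_k(K_n)$.

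For (i), note that $A\subset\bfC$ is $\bfZ$-torsion-free, hence $\bfZ$-flat. Universal coefficients then yields isomorphisms $H^q_c(Y_n,\tilde L(k-2,\bfZ))\otimes_{\bfZ}A\xrightarrow{\sim}H^q_c(Y_n,\tilde L(k-2,A))$ and likewise for $H^q$. Since flat base change commutes with the formation of images,
\bes H^q_!(Y_n,\tilde L(k-2,\bfZ))\otimes_{\bfZ}A\xrightarrow{\sim}H^q_!(Y_n,\tilde L(k-2,A)). \ees
Flatness of $A$ over $\bfZ$ also yields $(M\otimes_{\bfZ}A)^{\rm tf}=M^{\rm tf}\otimes_{\bfZ}A$ for any finitely generated $\bfZ$-module $M$; pulling back through $\delta_q^{-1}$ produces the desired isomorphism $L^q_{\bfZ}(K_n)\otimes_{\bfZ}A\cong L^q_A(K_n)$. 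The main obstacle is ensuring that forming $H^q_!=\image(H^q_c\to H^q)$ commutes both with flat base change (clear from exactness of $-\otimes_{\bfZ}A$) and with passage to the tf-quotient. For $q=1$ Lemma \ref{tfH1} removes torsion outright under the stated hypotheses on $\ell$; for $q=2$, where the torsion is genuine, one must additionally handle the switch from $\tilde L(k-2,A)^0$ to $\tilde L(k-2,A)$, which under the running assumptions on $\ell$ is harmless because the pairing $[\cdot,\cdot]_{k-2}$ is perfect and these sheaves are canonically identified.
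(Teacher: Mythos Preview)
Your proof is correct; the key difference from the paper is that for (i) the paper simply cites \cite{Urban98}, Proposition 4.1.1, whereas you supply the argument directly via flat base change and universal coefficients. Your treatments of (ii) and (iii) are the same as the paper's, only more fully spelled out (the paper derives (ii) in one line from (i) with $A=\bfC$ and Theorem \ref{ESH}, and (iii) from Hecke-stability of $H^q_!$ and equivariance of $\delta_q$).

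One remark on your final paragraph: it introduces concerns that are not actually needed. Your argument for (i) --- that $H^q_!(-,\tilde L(k-2,\bfZ))\otimes_{\bfZ}A\cong H^q_!(-,\tilde L(k-2,A))$ by flatness, and that $(-)^{\rm tf}$ commutes with $-\otimes_{\bfZ}A$ for finitely generated abelian groups --- is complete as stated and works uniformly for $q=1,2$ regardless of whether torsion is present; invoking Lemma \ref{tfH1} is superfluous. Likewise, you do not need to pass from $\tilde L(k-2,A)^0$ to $\tilde L(k-2,A)$: since $L(k-2,\bfZ)$ is free of finite rank over $\bfZ$, so is its $\bfZ$-dual, and hence $\tilde L(k-2,\bfZ)^0\otimes_{\bfZ}A\cong\tilde L(k-2,A)^0$ directly, without any appeal to perfectness of $[\cdot,\cdot]_{k-2}$ (which, as written, requires primes $\leq k-2$ to be invertible in $A$ and so is not available for $A=\bfZ$). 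Dropping that last paragraph would make the argument cleaner.
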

\begin{proof} The first 
 assertion is Proposition 4.1.1 in \cite{Urban98}. Setting $A=\bfC$ in the (i) and using the fact that $L^q_{\bfC}(K_n) \cong H^q_!(X_n, \tilde{L}(k-2,\bfC)) \cong \mS_k(K_n)$ gives (ii). Part (iii) follows from the fact that $H^q_!(X_n, \tilde{L}(k-2,A))$ is Hecke-stable and the isomorphisms $\delta_q$ are Hecke-equivariant.
\end{proof}

\begin{cor} \label{inje} The maps $\iota_q$ are injective for $q=1,2$. \end{cor}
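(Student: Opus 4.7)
The plan is to analyze $\ker(\iota_q)$ through the long exact cohomology sequence coming from the short exact sequence of $\Gamma_{n,j}$-modules
\[
0 \to L(k-2,A) \to L(k-2,\bfC) \to L(k-2,\bfC/A) \to 0,
\]
invoking the identification of sheaf and group cohomology of diagram (\ref{diagramgc}). The kernel of $\iota_q$ is then the image of the connecting homomorphism
\[
\bigoplus_j H^{q-1}(\Gamma_{n,j}, L(k-2,\bfC/A)) \to \bigoplus_j H^q_!(Y_n, \tilde L(k-2,A)).
\]

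For $q=1$, the vanishing of this connecting map is equivalent to the surjectivity of
\[
\bigoplus_j H^0(\Gamma_{n,j}, L(k-2,\bfC)) \twoheadrightarrow \bigoplus_j H^0(\Gamma_{n,j}, L(k-2,\bfC/A)),
\]
which is precisely the short exactness of the $H^0$ row established inside the proof of Lemma \ref{tfH1} (using the hypothesis on $A$ and the condition on $\ell$). Hence $\iota_1$ is injective.

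For $q=2$, I would instead exploit Theorem \ref{2.5.1}. Given $x \in \ker(\iota_2)$, functoriality of the cup product under the inclusion $A \hookrightarrow \bfC$ shows that for every $y \in H^1_!(Y_n, \tilde L(k-2,A))$ the pairing $J_n(y,x) \in H^3_c(Y_n, A) \cong A^{\#\Cl_F}$ maps to $J_n^{\bfC}(\iota_1(y), \iota_2(x)) = 0$; since $A \hookrightarrow \bfC$ is injective on $H^3_c$, this forces $J_n(y,x) = 0$ for all $y$. Perfectness of $J_n$ modulo torsion then implies that $x$ is a torsion class. The assumption that $A$ inverts every prime $q \leq k-2$ restricts any residual torsion to be $\ell$-primary for $\ell > k-2$, and this remainder is killed by applying Lemma \ref{tfH1} on the dual side of the pairing (equivalently, by the Poincar\'e duality underlying the construction of $J_n$), giving $x = 0$.

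The main obstacle is exactly the torsion in degree two cohomology flagged in the introduction as the barrier to a level-raising conclusion. Here it is bypassed because we need only injectivity of $\iota_2$ (the vanishing of a specific torsion class), not a lifting of cohomology classes to characteristic zero, and the perfect-mod-torsion pairing combined with the $q=1$ torsion-freeness and the small-prime invertibility in $A$ supplies enough control to rule out the offending classes.
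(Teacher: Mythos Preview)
Your argument for $q=2$ has a genuine gap at the final step. After correctly deducing via the cup product that any $x\in\ker\iota_2$ pairs to zero with all of $H^1_!$ and hence is torsion, you try to kill this torsion by invoking ``Lemma~\ref{tfH1} on the dual side'' and ``Poincar\'e duality''. This does not work: Lemma~\ref{tfH1} is a statement about $H^1$, and the pairing $J_n$ is only perfect \emph{modulo torsion}, so it says nothing about torsion classes in $H^2_!$. In fact $H^2_!(Y_n,\tilde L(k-2,A))$ can genuinely contain torsion (this is exactly the phenomenon discussed in Section~\ref{The module of congruences}), so $\iota_2$ on the full $H^2_!$ is \emph{not} injective in general. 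What you are missing is that in the definition of $L^2_A(K_n)$ the map $\iota_2$ is applied to $H^2_!(\,\cdot\,)^{\rm tf}$; the corollary is asserting injectivity on the torsion-free quotient. Once you read the statement this way, your pairing argument already finishes: ``$x$ is torsion'' immediately gives $x=0$ in $H^2_!(\,\cdot\,)^{\rm tf}$, with no further duality needed.

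Even with that fix, your route is more circuitous than the paper's. You treat $q=1$ and $q=2$ separately, the first via the long exact sequence (importing the hypothesis of Lemma~\ref{tfH1}) and the second via Theorem~\ref{2.5.1} (importing its hypotheses $\ell>k-1$, $\ell\nmid N(\fN\fp)D_F\#\OF^\times$). The paper instead gives a uniform one-line argument from Lemma~\ref{basech}(i) and Theorem~\ref{ESH}: the base-change isomorphism $L^q_{\bfZ}\otimes_{\bfZ}A\cong L^q_A$ (and in particular $L^q_{\bfZ}\otimes\bfC\cong \mS_k(K_n)$) shows that $\iota_q$ becomes an isomorphism after tensoring with $\bfC$, so its kernel on $H^q_!(\,\cdot\,)^{\rm tf}$ is trivial. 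No pairing, no case split, and no extra hypotheses on $\ell$ are required.
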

\begin{proof} This follows immediately from Theorem \ref{ESH} and Lemma \ref{basech}(i). \end{proof}

\begin{cor}\label{lat} 
The pairing $J_n$ induces a perfect pairing $$\left<\cdot, \cdot\right>_n: L^1_{\Oo}(K_n) \otimes L^2_{\Oo}(K_n) \to \Oo, \quad \left< f, h \right>_{n} = J_n(\delta_1(f), \delta_2(h))$$ \end{cor}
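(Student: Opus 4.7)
The plan is to transport the pairing $J_n$ of Theorem \ref{2.5.1} over to the lattice setting via the Eichler-Shimura-Harder isomorphisms $\delta_q$. As a preliminary I would note that the cup product is compatible with the base change $\Oo\hookrightarrow\bfC$: writing $J_n^{\bfC}$ for the $\bfC$-linear extension, one has $J_n^{\bfC}(\iota_1(x),\iota_2(y))=J_n(x,y)\in\Oo$ for all $x\in H^1_!(Y_n,\tilde{L}(k-2,\Oo))$ and $y\in H^2_!(Y_n,\tilde{L}(k-2,\Oo)^0)$. This is what guarantees that $\langle f,h\rangle_n$ takes values in $\Oo$ rather than only in $\bfC$, and lets us interpret the formula $\langle f,h\rangle_n=J_n(\delta_1(f),\delta_2(h))$ rigorously.

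Next I would unpack the two lattices. By Lemma \ref{tfH1}, the group $H^1_!(Y_n,\tilde{L}(k-2,\Oo))$ is already torsion-free, so its image under $\iota_1$ is too; combined with the injectivity of $\iota_1$ from Corollary \ref{inje}, this produces a canonical $\Oo$-module isomorphism
$$\Phi_1\colon L^1_{\Oo}(K_n)\xrightarrow{\sim} H^1_!(Y_n,\tilde{L}(k-2,\Oo)),\qquad \delta_1(f)=\iota_1(\Phi_1(f)).$$
Analogously, the injectivity of $\iota_2$ yields
$$\Phi_2\colon L^2_{\Oo}(K_n)\xrightarrow{\sim} H^2_!(Y_n,\tilde{L}(k-2,\Oo)^0)^{\rm tf}.$$
Under $(\Phi_1,\Phi_2)$, the preliminary compatibility shows that $\langle\cdot,\cdot\rangle_n$ corresponds to the restriction of $J_n$ to $H^1_!(Y_n,\tilde{L}(k-2,\Oo))\otimes H^2_!(Y_n,\tilde{L}(k-2,\Oo)^0)^{\rm tf}$.

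To finish, I would invoke Theorem \ref{2.5.1}: $J_n$ is perfect modulo torsion. Because $\Oo$ is torsion-free, $J_n$ annihilates any torsion input and so descends to a pairing on the tensor product of torsion-free quotients, where by the very meaning of ``perfect modulo torsion'' it is perfect. Since $H^1_!(Y_n,\tilde{L}(k-2,\Oo))$ is already torsion-free (Lemma \ref{tfH1}), this descended pairing coincides with $J_n$ on the source of $\Phi_1\otimes\Phi_2$. Transferring back via $\Phi_1,\Phi_2$ delivers the perfectness of $\langle\cdot,\cdot\rangle_n$.

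There is no serious obstacle: the proof is essentially bookkeeping across the isomorphisms $\delta_q$ and $\iota_q$. The two small points that deserve explicit mention are the compatibility of the cup product with base change $\Oo\hookrightarrow\bfC$ (which makes the formula for $\langle\cdot,\cdot\rangle_n$ well-defined into $\Oo$) and the observation that ``perfect modulo torsion'' passes to perfect on the torsion-free quotients because the target ring $\Oo$ has no torsion; both are standard.
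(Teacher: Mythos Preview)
Your proposal is correct and follows essentially the same route as the paper's proof: both use that $J_n$ is perfect modulo torsion (Theorem \ref{2.5.1}) together with the identification, via Corollary \ref{inje}, of $L^q_{\Oo}(K_n)$ with $H^q_!(Y_n,\tilde L(k-2,\Oo))^{\rm tf}$. Your version simply unpacks these identifications more explicitly (the maps $\Phi_1,\Phi_2$, the base-change compatibility, and the appeal to Lemma \ref{tfH1}), which is fine but not strictly needed since taking torsion-free quotients already handles both degrees uniformly.
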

\begin{proof} This follows from perfectness (mod torsion) of $J_n$ and the fact that by Corollary \ref{inje}, the maps $\delta_q$ induce isomorphisms between $H^q_!(X_n, L(k-2,\Oo))^{\rm tf}$ and the lattices $L^q_{\Oo}(K_n)$. \end{proof}

\section{Ihara's lemma for $H^2$} \label{sectionI1}
In this section we will prove an analogue of Ihara's lemma for the groups $H^2_!(Y_K, \tilde{L}(k-2,E/\Oo))$. 
More precisely, the goal of this section is to prove the following theorem.

\begin{thm} \label{IharaH2}
Assume $\ell \nmid \#\OF^{\times}$. 
 The map $$\alpha_2: \bigoplus_{j=1}^{\# \Cl_F} H^2_P(\Gamma_{0,j}, L(k-2,E/\Oo))^2 \to \bigoplus_{j=1}^{\# \Cl_F} H^2_P(\Gamma_{1,j},L(k-2,E/\Oo))$$ defined as $(f,g) \mapsto \alpha_1^{0,1}f + \alpha_1^{0,1\fp}\alpha_{\fp}^0g$ is injective. \end{thm}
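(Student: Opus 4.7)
My approach is to realize $\alpha_2$ as (part of) the Mayer-Vietoris differential attached to the action of $\Gamma_{-1,j}$ on the Bruhat-Tits tree $\mathcal{T}$ of $\SL_2(F_\fp)$, and then invoke the results of Blasius-Franke-Grunewald to kill the obstruction. Fix $j$. The group $\SL_2(F_\fp)$ acts on $\mathcal{T}$ with two orbits of vertices whose stabilizers are $\GL_2(\OFp)$ and its conjugate by $\eta$, and one edge orbit whose stabilizer is the standard Iwahori. Intersecting with the $S$-arithmetic (for $S=\{\fp\}$) group $\Gamma_{-1,j}$ gives vertex stabilizers conjugate to $\Gamma_{0,j}$ and $\Gamma_{0,j}^\fp$ and edge stabilizer $\Gamma_{1,j}$. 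By Serre's theory of groups acting on trees this yields, with $L = L(k-2, E/\Oo)$, a long exact sequence
\begin{equation*}
\cdots \to H^q(\Gamma_{-1,j}, L) \xrightarrow{\restr} H^q(\Gamma_{0,j}, L) \oplus H^q(\Gamma_{0,j}^\fp, L) \xrightarrow{\beta_q} H^q(\Gamma_{1,j}, L) \to H^{q+1}(\Gamma_{-1,j}, L) \to \cdots,
\end{equation*}
where $\beta_q$ is the difference of the two natural restrictions. Using the isomorphism $\alpha_\fp^0$ (conjugation by $\eta$) to identify $H^q(\Gamma_{0,j}^\fp, L) \cong H^q(\Gamma_{0,j}, L)$, the map $\beta_q$ becomes, up to a sign in one summand, the map $\alpha_q$ of the theorem; the sign change does not alter the kernel.

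Summing over $j$, it follows that $\ker \alpha_2$ equals the image of the restriction from $\bigoplus_j H^2(\Gamma_{-1,j}, L)$. To finish I would show this image meets $\bigoplus_j H^2_P(\Gamma_{0,j}, L)^2$ trivially. First, one must lift parabolicity: a class in the image whose components are parabolic should come from a class in $\bigoplus_j H^2_P(\Gamma_{-1,j}, L)$. This is a diagram chase based on matching the $\Gamma_{-1,j}$-conjugacy classes of Borel subgroups with those of $\Gamma_{0,j}$, $\Gamma_{0,j}^\fp$, and $\Gamma_{1,j}$ (since each Borel fixes an end of $\mathcal{T}$ and so acts on a subtree with the same vertex/edge stabilizer structure).

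The main step is then the vanishing $H^2_P(\Gamma_{-1,j}, L(k-2, E/\Oo)) = 0$. This is where \cite{BlasiusFrankeGrunewald94} enters: their analysis of cuspidal cohomology of $S$-arithmetic subgroups of $\SL_2$ over a number field identifies the cuspidal piece of $H^\ast(\Gamma_{-1,j}, L(k-2, E))$ in characteristic zero and shows that it vanishes in the degree relevant for us. Combining this with the long exact sequence associated to $0 \to L(k-2, \Oo) \to L(k-2, E) \to L(k-2, E/\Oo) \to 0$, in the spirit of Lemma \ref{tfH1}, yields the analogous vanishing with $E/\Oo$-coefficients; the running hypothesis $\ell \nmid \#\OF^\times$ keeps the relevant $H^\ast(\Gamma_{-1,j}, L(k-2, \Oo))$-torsion under control.

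The principal obstacle is this last step. The vanishing of $H^2_P(\Gamma_{-1,j}, L(k-2, E))$ is an $S$-arithmetic phenomenon: the corresponding group $H^2_P(\Gamma_{0,j}, L(k-2, E))$ is emphatically \emph{non}-zero (it is a target of Eichler-Shimura-Harder), so it is only by passing to the larger $\Gamma_{-1,j}$ that all cuspidal classes in degree $2$ disappear. Translating the Blasius-Franke-Grunewald results (formulated for $\SL_2$ with complex coefficients and a slightly different normalization) into the $\GL_2$ setting used here, while correctly tracking the conjugation by $\eta$ that is built into the definition of $\alpha_2$ and rigorously identifying $\alpha_2$ with the Mayer-Vietoris map across all $\#\Cl_F$ components, is the bulk of the work.
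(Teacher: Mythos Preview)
Your Mayer--Vietoris/amalgamated-product setup and the invocation of Blasius--Franke--Grunewald are exactly the framework the paper uses. But the execution diverges at the crucial point, and your version has a genuine gap.

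The paper does \emph{not} try to prove $H^2_P(\Gamma_{-1,j}, L(k-2,E/\Oo))=0$, nor does it lift parabolicity to the $S$-arithmetic group. Instead it shows the stronger statement that the \emph{full} restriction map
\[
H^2(\Gamma'_{-1,j}, L(k-2,E/\Oo)) \longrightarrow H^2(\Gamma'_{0,j}, L(k-2,E/\Oo))
\]
is zero (and likewise for $(\Gamma'_{0,j})^\fp$), which immediately kills the image of $\beta$ and makes $\alpha_2$ injective on all of $H^2$, hence on $H^2_P$. The BFG input is that this restriction is zero with $E$-coefficients. To pass to $E/\Oo$-coefficients one looks at the diagram with rows $E$, $E/\Oo$, $\Oo$ (shifted by one in degree): the top row vanishes by BFG, and the bottom target $H^3(\Gamma'_{0,j}, L(k-2,\Oo))$ vanishes because the \emph{arithmetic} group $\Gamma'_{0,j}$ has cohomological dimension $\le 2$. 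A short splitting argument then forces the middle restriction to vanish as well.

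Your route instead requires controlling the torsion in $H^3(\Gamma_{-1,j}, L(k-2,\Oo))$ for the \emph{$S$-arithmetic} group. But $\Gamma_{-1,j}$ has cohomological dimension $3$, not $2$, and by the very Mayer--Vietoris sequence you wrote down, $H^3(\Gamma_{-1,j}, L(k-2,\Oo))$ is the cokernel of $H^2(\Gamma_{0,j})\oplus H^2(\Gamma_{0,j}^\fp)\to H^2(\Gamma_{1,j})$ with $\Oo$-coefficients. This cokernel can carry arbitrary $\ell$-torsion (it is closely related to the torsion in $H^2$ at level $\fN\fp$, which is known to be nontrivial in examples). The hypothesis $\ell\nmid\#\OF^\times$ is only there to descend from $\GL_2$ to $\SL_2$; it says nothing about this torsion. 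So your sentence ``the running hypothesis $\ell\nmid\#\OF^\times$ keeps the relevant $H^*(\Gamma_{-1,j},L(k-2,\Oo))$-torsion under control'' is not justified, and this is where the argument breaks. The fix is to move the coefficient-change argument from the $S$-arithmetic source to the arithmetic target, exactly as the paper does; once you do that, the parabolic-lifting step also becomes unnecessary.
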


\begin{proof} 
 Recall that $\alpha_1^{*, *}$ are restriction maps.  Set $\Gamma'_{i,j}:= \Gamma_{i,j}\cap \SL_2(F)$ for $i=-1,0,1$.
We have a commutative diagram 
$$\xymatrix{\bigoplus_j H^2_P(\Gamma_{0,j},L(k-2,E/\Oo))^2 \ar[r]^{\alpha_2}\ar[d]^{\textup{res}} & \bigoplus_j H^2_P(\Gamma_{1,j},L(k-2,E/\Oo))\ar[d]\\ \bigoplus_j H^2_P(\Gamma'_{0,j},L(k-2,E/\Oo))^2 \ar[r]^{\alpha_2} &\bigoplus_j  H^2_P(\Gamma'_{1,j},L(k-2,E/\Oo)),}$$  The injectivity of the left vertical arrow follows from the fact that $\ell \nmid \#\OF^{\times}$. So, it is enough to prove that `bottom' $\alpha_2$ is injective.
The main ingredient in this proof is a result of Blasius, Franke and Grunewald which we now state in a special form pertaining to our situation. 

\begin{thm}[Blasius, Franke, Grunewald] \label{BFG} Let $\Gamma \subset \SL_2(F)$ be a congruence subgroup and let $\Gamma_{S} \supset \Gamma$ be an $S$-arithmetic subgroup of $\SL_2(F)$ with $S$ a finite set of finite primes of $F$. Then the image of the restriction map \be \label{G2}H^*(\Gamma_{S}, E) \to H^*(\Gamma,E)\ee coincides with the image of the space of $\SL_2(\bfC)$-invariant forms on the symmetric space $\SL_2(\bfC)/K_{\infty}$ in the de Rham cohomology of the locally symmetric space $\Gamma \setminus \SL_2(\bfC)/K_{\infty}$ (which is identified with the group cohomology of $\Gamma$). If $E$ is replaced by a non-constant irreducible $E$-representation, then the map (\ref{G2}) is the zero map.  \end{thm}

\begin{proof} This is Theorem 4 in \cite{BlasiusFrankeGrunewald94}.
\end{proof}


It follows from a theorem of Serre (cf. \cite{Klosin08}, Theorem 8) that $\Gamma'_{-1, j}$ is the amalgamated product of $\Gamma'_{0,j}$ and $(\Gamma'_{0,j})^{\fp}$ along $\Gamma'_{1,j}:=\Gamma'_{0,j} \cap (\Gamma'_{0,j})^{\fp}$. 
Hence using the exact cohomology sequence of Lyndon (cf. \cite{Serre03}, p. 127) one gets that the top row in the following diagram is exact (for any coefficients $M$ which we suppress):
\be \xymatrix@C9em@R5em{\bigoplus_j H^2(\Gamma'_{-1,j})\ar[r]^(.4){f \mapsto (f|_{\Gamma'_{0,j}},f|_{(\Gamma'_{0,j})^{\fp}})}\ar@{=}[d] &\bigoplus_j( H^2(\Gamma'_{0,j}) \oplus H^2((\Gamma'_{0,j})^{\fp})) \ar[r]^(.6){(f,g) \mapsto f|_{\Gamma'_{1,j}}-g|_{\Gamma'_{1,j}}}
\ar[d]_{\wr}^{(f,g) \mapsto (f, -\bsmat \pi \\ & 1 \esmat^* g)} & \bigoplus_j H^2(\Gamma'_{1,j})\ar@{=}[d]\\
\bigoplus_j H^2(\Gamma'_{-1,j}) \ar[r]^{\beta_M}_{f\mapsto (f|_{\Gamma'_{0,j}}, -\bsmat \pi \\ & 1 \esmat^* f|_{\Gamma'_{0,j}})}& \bigoplus_j H^2(\Gamma'_{0,j})^2 \ar[r]^{\alpha_M}_{(f,g) \mapsto (f|_{\Gamma'_{1,j}} + \bsmat \pi^{-1} \\ & 1 \esmat^* g |_{\Gamma'_{1,j}})} & \bigoplus_j H^2(\Gamma'_{1,j}),}\ee 
where the map $\bsmat \pi \\ & 1\esmat^*$ is induced from the map $H^2(Y_0^{\fp},\tilde{M}_0^{\fp}) \to H^2(Y_0, \tilde{M}_0)$ arising from the isomorphism $K_0^{\fp} \xrightarrow{\sim} K_0$ given by conjugation by $\bsmat \pi \\ & 1\esmat$ (i.e., equals the inverse of $\alpha_{\fp}^0$). Note that the maps in the bottom row and the map represented by the middle vertical arrow do not necessarily preserve the direct summands (they do if $\fp$ is principal), but the maps in the top row do.

It is clear that this diagram commutes (essentially by the definitions of the maps involved). 
Let $(f,g) \in \ker \alpha_M$. Then the corresponding element in $\bigoplus_j (H^2(\Gamma'_{0,j}) \oplus H^2((\Gamma'_{0,j})^{\fp}))$ is in the image of the top left arrow. Hence by commutativity $(f,g)$ is in the image of $\beta_M$, so, to prove the theorem it is enough to show that $\beta_{L(k-2,E/\Oo)}=0$. For this it suffices to prove that both of the restriction maps $f \mapsto f|_{\Gamma'_{0,j}}$ and $f \mapsto f|_{(\Gamma'_{0,j})^{\fp}}$ are the zero maps (when $M=L(k-2,E/\Oo)$). We will show that the first restriction map is zero, the proof of the vanishing of the second map being essentially identical.

 Consider the following commutative diagram with exact columns and horizontal arrows being restriction maps:
\be \label{EEO} \xymatrix{H^2(\Gamma'_{-1,j},L(k-2,E)) \ar[r] \ar[d] & H^2(\Gamma'_{0,j},L(k-2,E))\ar[d] \\
H^2(\Gamma'_{-1,j},L(k-2,E/\Oo)) \ar[r]\ar[d] & H^2(\Gamma'_{0,j},L(k-2,E/\Oo))\ar[d]\\
H^3(\Gamma'_{-1,j},L(k-2,\Oo)) \ar[r] & H^3(\Gamma'_{0,j},L(k-2,\Oo))
}\ee

By Theorem \ref{BFG} (with $S=\{\fp\}$) the top horizontal arrow in (\ref{EEO}) is the zero map (for $k>2$ the representation $L(k-2,E)$ is irreducible (cf. e.g. \cite{Harder87}, p.45) and for $k=2$ one checks that the space of $\SL_2(\bfC)$-invariant forms is zero). Since $\Gamma_{0,j}$ is a torsion-free discrete subgroup of $\GL_2(F)$, it follows from Proposition 18(b) of \cite{Serre71}, that the cohomological dimension of $\Gamma_{0,j}$ is no greater than two (because the real dimension of the symmetric space for $\GL_2(F)$ is 3). As the index of $\Gamma'_{0,j}$ in $\Gamma_{0,j}$ is finite, Proposition 5(b) in [loc.cit.] implies that the cohomological dimension of $\Gamma'_{0,j}$ is also no greater than two. Hence we must have $H^3(\Gamma'_{0,j},L(k-2,\Oo))=0$. Thus the bottom horizontal arrow in diagram (\ref{EEO}) is the zero map. We will now show that the middle horizontal arrow is also zero. 

Write $A$ for the image of the top-left vertical arrow, $B$ for $H^2(\Gamma'_{-1,j},L(k-2,E/\Oo))$ and $C$ for the image of the bottom-left vertical arrow. We have a short exact sequence \be \label{ABC} 0 \to A \to B \to C \to 0.\ee It is enough to show that (\ref{ABC}) splits as a sequence of $\Oo$-modules. By taking the Pontryagin duals we obtain an exact sequence \be \label{ABC2} 0 \to \Hom_{\Oo}(C, E/\Oo) \to \Hom_{\Oo}(B,E/\Oo)\to \Hom_{\Oo}(A, E/\Oo) .\ee 
Let us first show that all of the $\Hom$-groups in (\ref{ABC2}) are finitely generated $\Oo$-modules. 

As noted before, $\Gamma'_{-1, j}$ is the amalgamated product of $\Gamma'_{0,j}$ and $(\Gamma'_{0,j})^{\fp}$ along $\Gamma'_{1,j}$. By the same argument as in the case of $\Gamma'_{0,j}$ one sees that $(\Gamma'_{0,j})^{\fp}$ and $\Gamma'_{1,j}$ all have cohomological dimension no greater than 2. Hence, by Proposition 7 of \cite{Serre71}, the group $\Gamma'_{-1, j}$ has cohomological dimension no greater than 3. Thus, one can apply Proposition 4 of [loc.cit.] to conclude that $H^3(\Gamma'_{-1,j},L(k-2,\Oo))$ is a finitely generated $\Oo$-module. Thus $C$ and $\Hom(C, E/\Oo)$ are finite groups. Moreover, by Remarque 1 of [loc.cit.] we also get that $H^2(\Gamma'_{-1,j},L(k-2,E))$ is a finite dimensional vector space over $E$.  It follows that $A \cong (E/\Oo)^m$ for some $m$. 
 Thus, $\Hom_{\Oo}(A, E/\Oo) \cong \Oo^m$.  Hence all the Hom-groups in (\ref{ABC2}) are finitely generated $\Oo$-modules. So, in particular $\Hom_{\Oo}(B, E/\Oo) \cong \Oo^m \oplus G$, where $G$ is a finite group. 
We conclude that $B \cong (E/\Oo)^m \oplus G$. 
Then it is clear that 
(\ref{ABC}) splits. \end{proof}

\section{Ihara's lemma for $H^1$} \label{sectionI2}
In this section we will prove an analogue of Ihara's lemma for the groups $H^1_!(Y_K, \tilde{L}(k-2,E/\Oo))$. 
More precisely, the goal of this section is to prove the following theorem.

\begin{thm} \label{IharaH1}
 Suppose $\ell >k-2$ and $\ell \nmid \# \OF^{\times}\Phi(\fN)$. The map $$\alpha_1: \bigoplus_{j=1}^{\# \Cl_F} H^1_P(\Gamma_{0,j}, L(k-2,E/\Oo))^2 \to \bigoplus_{j=1}^{\# \Cl_F} H^1_P(\Gamma_{1,j},L(k-2,E/\Oo))$$ defined as $(f,g) \mapsto \alpha_1^{0,1}f + \alpha_1^{0,1\fp}\alpha_{\fp}^0g$ is injective. \end{thm}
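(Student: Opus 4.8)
The plan is to mimic the $H^2$ argument as far as possible and then supply the extra ingredient needed because the Blasius--Franke--Grunewald theorem gives vanishing of the relevant restriction maps only for a \emph{non-constant} coefficient module, while for $H^1$ the interesting contribution survives in degree one (it is the cuspidal part). So first I would set $\Gamma'_{i,j} = \Gamma_{i,j} \cap \SL_2(F)$, note as before that $\ell \nmid \#\OF^\times$ makes the restriction from $\bigoplus_j H^1_P(\Gamma_{i,j}, \cdot)$ to $\bigoplus_j H^1_P(\Gamma'_{i,j}, \cdot)$ injective, and reduce to showing that the map $\alpha_1$ on the $\SL_2$-level groups is injective. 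Using Serre's theorem (as cited via \cite{Klosin08}, Theorem 8) that $\Gamma'_{-1,j}$ is the amalgam of $\Gamma'_{0,j}$ and $(\Gamma'_{0,j})^{\fp}$ along $\Gamma'_{1,j}$, the Mayer--Vietoris (Lyndon) sequence gives a four-term exact sequence in low degrees, and chasing it exactly as in the proof of Theorem \ref{IharaH2} reduces the injectivity of $\alpha_1$ to the vanishing of the analogue of $\beta_M$ in degree one, i.e.\ of the map $\bigoplus_j H^1(\Gamma'_{-1,j}, L(k-2,E/\Oo)) \to \bigoplus_j H^1(\Gamma'_{0,j}, L(k-2,E/\Oo))^2$ built from the two restriction maps $H^1(\Gamma'_{-1,j}, \cdot) \to H^1(\Gamma'_{0,j}, \cdot)$.

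To prove this vanishing I would run the same three-row diagram as \eqref{EEO}, replacing $H^2, H^3$ by $H^1, H^2$: the columns come from $0 \to L(k-2,\Oo) \to L(k-2,E) \to L(k-2,E/\Oo) \to 0$, the rows are restriction from $\Gamma'_{-1,j}$ to $\Gamma'_{0,j}$. For the top row, $H^1(\Gamma'_{-1,j}, L(k-2,E)) \to H^1(\Gamma'_{0,j}, L(k-2,E))$, I invoke Theorem \ref{BFG} with $S = \{\fp\}$: if $k > 2$ then $L(k-2,E)$ is a non-constant irreducible representation so this map is zero; if $k = 2$ the coefficients are constant and the image equals the image of the $\SL_2(\bfC)$-invariant forms on $\SL_2(\bfC)/K_\infty$ in $H^1$, which is zero since $\SL_2(\bfC)/K_\infty$ is $3$-dimensional hyperbolic space and carries no invariant $1$-form. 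For the bottom row I need $H^2(\Gamma'_{0,j}, L(k-2,\Oo))$ to be controllable: here, unlike the $H^2$ case, it need not vanish (the cohomological dimension of $\Gamma'_{0,j}$ is $\leq 2$, so $H^2$ can be nonzero), and this is where the extra hypothesis $\ell \nmid \Phi(\fN)$ and $\ell > k-2$ enter via Lemma \ref{tfH1} and the relation between $H^1_!$ and parabolic group cohomology: I would use torsion-freeness of $H^1_!(Y_0, \tilde L(k-2,A))$ (Lemma \ref{tfH1}) together with the long exact sequence coming from the coefficient sequence and the input from \cite{SengunTurkelli09} (as advertised in the introduction) to show that the relevant map into $H^2(\Gamma'_{0,j}, L(k-2,\Oo))$ lands in (or the obstruction is measured by) a torsion-free module, forcing the middle row restriction to vanish after the usual splitting argument.

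Concretely, as in the $H^2$ proof I would let $A$ be the image of the top-left vertical map, $B = H^1(\Gamma'_{-1,j}, L(k-2,E/\Oo))$, $C$ the image of the bottom-left vertical map, forming $0 \to A \to B \to C \to 0$; I would check $A$ is divisible of the form $(E/\Oo)^m$ (using finiteness of $H^1(\Gamma'_{-1,j}, L(k-2,E))$, which follows from Serre's finiteness results since $\Gamma'_{-1,j}$ has cohomological dimension $\leq 3$ and is of type $(\mathrm{FL})$ up to finite index) and that $C$ is finite (it injects into $H^2(\Gamma'_{-1,j}, L(k-2,\Oo))$, a finitely generated $\Oo$-module, and is killed by the map to $H^2(\Gamma'_{0,j}, L(k-2,\Oo))$ which I will have arranged to be torsion-free on the relevant image, or is itself finite), so the sequence splits over $\Oo$ after Pontryagin duality, giving $B \cong (E/\Oo)^m \oplus (\text{finite})$ and hence that the middle restriction map, whose image is divisible and lands in the finitely generated $\Oo$-module $H^1(\Gamma'_{0,j}, L(k-2,E/\Oo))/(\text{the part killed})$, is zero. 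The \textbf{main obstacle} I anticipate is exactly the bottom-row term: showing that the restriction map $H^2(\Gamma'_{-1,j}, L(k-2,\Oo)) \to H^2(\Gamma'_{0,j}, L(k-2,\Oo))$ is injective on torsion (equivalently that $C$ is "seen" by a torsion-free quotient), since $H^2(\Gamma'_{0,j}, L(k-2,\Oo))$ genuinely has torsion in general; this is precisely where the hypotheses $\ell > k-2$ and $\ell \nmid \Phi(\fN)$ must be used, presumably through Lemma \ref{tfH1} applied at level $\fN$ and $\fN\fp$ together with the comparison between parabolic and full group cohomology and the boundary (Borel--Serre) contribution controlled as in \cite{SengunTurkelli09}.
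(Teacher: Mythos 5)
There is a genuine gap, and it sits exactly where you flag it. Your plan is to transport the proof of Theorem \ref{IharaH2} down one degree, so that the bottom row of the analogue of diagram (\ref{EEO}) becomes the restriction map $H^2(\Gamma'_{-1,j},L(k-2,\Oo)) \to H^2(\Gamma'_{0,j},L(k-2,\Oo))$. In the $H^2$ proof the corresponding target was $H^3(\Gamma'_{0,j},L(k-2,\Oo))=0$ by cohomological dimension, and everything hinged on that. In degree one there is no such vanishing, and the hypotheses $\ell>k-2$, $\ell\nmid\Phi(\fN)$ do \emph{not} rescue you: they say nothing about torsion in $H^2(\Gamma'_{0,j},L(k-2,\Oo))$, which is a genuine and well-documented phenomenon (cf.\ \cite{Sengun11} and the discussion of Calegari--Venkatesh in section \ref{The module of congruences}); Lemma \ref{tfH1} controls only $H^1_!$, not $H^2$. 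The paper itself makes this precise in the remark following Theorem \ref{IharaH1}: \emph{if} one knew $H^2_P(\Gamma_{0,j},L(k-2,\Oo))$ were torsion-free, the $H^2$ argument would carry over verbatim and the extra hypotheses would be unnecessary --- but this is not known, which is precisely why a different proof is given. A second, related overreach: your scheme would prove that the whole degree-one restriction map $\beta$ on $H^1(\Gamma'_{-1,j},L(k-2,E/\Oo))$ vanishes, i.e.\ injectivity of $\alpha_1$ on all of $H^1$, which the torsion in $H^2(\cdot,\Oo)$ (entering $H^1(\cdot,E/\Oo)$ through the Bockstein) obstructs; the theorem only claims, and the actual proof only uses, injectivity on parabolic cohomology.

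The paper's proof is a different route altogether, following Diamond \cite{Diamond91}: after restricting (using $\ell\nmid\#\OF^{\times}\Phi(\fN)$) to principal congruence subgroups $\Gamma_j(\fN)$ and invoking the amalgam only to identify the kernel of $\alpha$ with classes coming from $\Gamma_{\fN,j}\subset\SL_2(\Oo_{(\fp)})_j$, one shows (Lemma \ref{coc}) that a cocycle whose $\beta$-image is \emph{parabolic} represents a class in $H^1_{S_j}(\Gamma_{\fN,j},L(k-2,E/\Oo))$, and then proves $H^1_{S_j}=0$. That vanishing is reduced to $\bfF$-coefficients, split by inflation--restriction for $\SL_2(\OF/\ell)$ into (i) $H^1_Q(\SL_2(\OF/\ell),L(k-2,\bfF))=0$, proved via induced modules $I\otimes I$, Shapiro's lemma and the decomposition from \cite{SengunTurkelli09} (this is where $\ell>k-2$ is used), and (ii) the vanishing of $H^1_{S_j\cap\Gamma_{\fN\ell,j}}(\Gamma_{\fN\ell,j},L(k-2,\bfF))$, which uses Serre's theorem $U_{\fN\ell,j}=\Gamma_{\fN\ell\fp_j\Oo_{(\fp)}}$ and the choice $(N\fp_j-1,\ell)=1$ as in \cite{Klosin08}. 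None of this machinery appears in your proposal, and without it (or a new idea controlling degree-two torsion) the splitting argument you sketch cannot be completed.
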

\begin{rem} In \cite{Klosin08} the author proved Theorem \ref{IharaH1} for the case $k=2$, so we will assume below that $k>2$.  \end{rem}
\begin{rem} If one knew that $H^2_P(\Gamma_{0,j},L(k-2,\Oo))$ was torsion-free then the proof of Theorem \ref{IharaH2} would carry over verbatim  to this case (with $n$-degree cohomology replaced by $(n-1)$-degree cohomology) as then the bottom horizontal arrow in diagram (\ref{EEO}) would have to be zero on the maximal torsion subgroup which in turn contains the image of the bottom-left vertical arrow. In this case the assumptions that $\ell > k-2$ and $\ell \nmid \Phi(\fN)$ are unnecessary. \end{rem}

\begin{proof}[Proof of Theorem \ref{IharaH1}] In this proof we mostly follow Diamond \cite{Diamond91}, proof of Lemma 3.2, but indicate where the arguments of [loc.cit.] need to be modified. 
Let $\Oo_{(\fp)}$ 
denote the ring of $\fp$-integers in $F$, i.e., the
elements of $F$, whose $\fq$-adic valuation is non-negative for every
prime $\fq \neq \fp$.   For $j=1,2,\dots,\#\Cl_F$ put 
$$\SL_2(\Oo_{(\fp)})_j:=\left\{\bmat a&b\\c&d \emat \in \SL_2(F) \mid a,d \in \Oo_{(\fp)}, 
b\in \fp_j \Oo_{(\fp)}, c\in \fp_j^{-1}\Oo_{(\fp)}\right\}.$$ 
Define the $j$-th principal congruence 
subgroup of level $\fN$ by $$\G_{\fN, j}:= \left\{\bmat a&b\\c&d \emat \in \SL_2(\Oo_{(\fp)})_j \mid
b,c\in \fN \Oo_{(\fp)}, a\equiv d \equiv 1 \hf \textup{mod} \hs \fN 
\Oo_{(\fp)}
\right\}.$$
By Lemma 10 in \cite{Klosin08}, we may assume that the ideals $\fp_j$ satisfy $(N\fp_j-1, \ell)=1$.
 We have a commutative diagram 
$$\xymatrix{\bigoplus_j H^1_P(\Gamma_{0,j},L(k-2,E/\Oo))^2 \ar[r]^{\alpha}\ar[d]^{\textup{res}} & \bigoplus_j H^1_P(\Gamma_{1,j},L(k-2,E/\Oo))\ar[d]\\ \bigoplus_j H^1_P(\Gamma_{j}(\fN),L(k-2,E/\Oo))^2 \ar[r]^{\alpha} &\bigoplus_j  H^1_P(\Gamma_{\cap},L(k-2,E/\Oo)),}$$ where the group $\Gamma_j(\fN)\subset \Gamma_{0,j}\cap \SL_2(\OF)_j$ is defined as the principal congruence subgroup of level $\fN$, $\Gamma_{\cap}:= \Gamma_j(\fN) \cap \Gamma_j(\fN)^{\fp}$ and 
$\SL_2(\OF)_j$ is defined in the same way as $\SL_2(\Oo_{(\fp)})_j$ but with $\Oo_{(\fp)}$ replaced by $\OF$. The injectivity of the left vertical arrow follows from the fact that $\ell \nmid \#\OF^{\times}\Phi(\fN)$. 

 As in \cite{Diamond91} we will reduce the problem to showing that $H^1_{S_j}(\Gamma_{\fN,j},L(k-2,E/\Oo))=0$ for all $j$, where $S_j \subset \Gamma_{\fN,j}$ is the subset of elements conjugate in $\SL_2(\Oo_{(\fp)})_j$ to $\bmat 1&* \\ 0&1\emat$. 
Here for a group $G$, a subset $Q \subset G$ and a $G$-module $A$ we write $H^1_Q(G,A)$ for the subgroup of $H^1(G,A)$ whose classes are represented by  the cocycles $u$ satisfying $u(\gamma) \in (\gamma-1)A$ for all $\gamma \in Q$ (by which we mean that for every $\gamma \in G$ there exists $a \in A$ such that $u(\gamma)=\gamma a -a$). Note that in the case when $G=\Gamma_j(\fN)$ and $Q$ is the subset of parabolic elements one clearly has  $H^1_Q(G,A)\supset H^1_P(G,A)$.


By arguments analogous to the ones used in the proof of Theorem \ref{IharaH2} one obtains a commutative diagram 
\be \xymatrix@C9em@R5em{\bigoplus_j H^1(\Gamma_{\fN,j})\ar[r]^(.4){f \mapsto (f|_{\Gamma_j(\fN)},f|_{\Gamma_j(\fN)^{\fp}})}\ar@{=}[d] &\bigoplus_j( H^1(\Gamma_j(\fN)) \oplus H^1(\Gamma_j(\fN)^{\fp})) \ar[r]^(.6){(f,g) \mapsto f|_{\Gamma_{\cap}}-g|_{\Gamma_{\cap}}}
\ar[d]^{(f,g) \mapsto (f, -\bsmat \pi \\ & 1 \esmat^* g)}_{\wr} & \bigoplus_j H^1(\Gamma_{\cap})\ar@{=}[d]\\
\bigoplus_j H^1(\Gamma_{\fN,j}) \ar[r]^{\beta}_{f\mapsto (f|_{\Gamma_j(\fN)}, -\bsmat \pi \\ & 1 \esmat^* g|_{\Gamma_j(\fN)})}& \bigoplus_j H^1(\Gamma_j(\fN))^2 \ar[r]^{\alpha}_{(f,g) \mapsto (f|_{\Gamma_{\cap}} + \bsmat \pi^{-1} \\ & 1 \esmat^* g |_{\Gamma_{\cap}})} & \bigoplus_j H^1(\Gamma_{\cap})}\ee with exact rows.

\begin{lemma}\label{coc}  Every cocycle $u$ in $Z^1(\Gamma_{\fN, j}, L(k-2,E/\Oo))$, which is mapped by $\beta$ into $\bigoplus_j H^1_P(\Gamma_j(\fN))^2$  represents a class in $H^1_{S_j}(\Gamma_{\fN,j}, L(k-2,E/\Oo))$. \end{lemma}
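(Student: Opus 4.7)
The plan is to show directly that, for every $\gamma \in S_j$, one has $u(\gamma) \in (\gamma - 1) L(k-2, E/\Oo)$, which is exactly what it means for $[u]$ to lie in $H^1_{S_j}(\Gamma_{\fN, j}, L(k-2, E/\Oo))$. The central tool is Serre's realization of the amalgam
\[ \Gamma_{\fN, j} \;\cong\; \Gamma_j(\fN) \,*_{\Gamma_{\cap}}\, \Gamma_j(\fN)^{\fp} \]
as the action of $\Gamma_{\fN,j}$ on the Bruhat--Tits tree $T$ of $\SL_2(F_{\fp})$, with the two vertex-orbit stabilizers being $\Gamma_j(\fN)$ and $\Gamma_j(\fN)^{\fp}$ (and edge stabilizer $\Gamma_{\cap}$). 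This is the analogue over $F$ of the input from \cite{Diamond91} which this proof is modeled on.

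Fix $\gamma \in S_j$. By the definition of $S_j$, $\gamma$ is $\SL_2(\Oo_{(\fp)})_j$-conjugate to an upper unipotent matrix, hence is unipotent when viewed in $\SL_2(F_{\fp})$. A non-trivial unipotent element of $\SL_2$ of a non-archimedean local field fixes an infinite half-line in its Bruhat--Tits tree; in particular $\gamma$ fixes some vertex $v \in T$. Using transitivity of $\Gamma_{\fN, j}$ on each vertex orbit, there exists $g \in \Gamma_{\fN, j}$ with
\[ \gamma' \;:=\; g^{-1} \gamma g \;\in\; \Gamma_j(\fN) \quad \text{or} \quad \gamma' \in \Gamma_j(\fN)^{\fp}, \]
according to which of the two orbits $v$ belongs to.

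Next I would exploit the parabolicity hypothesis. Since $\gamma'$ is unipotent it lies in the unipotent radical of some Borel $B \subset \GL_2(F)$, hence in $B \cap \Gamma_j(\fN)$ (respectively $B \cap \Gamma_j(\fN)^{\fp}$). The assumption $\beta([u]) \in \bigoplus_j H^1_P(\Gamma_j(\fN))^2$ says that $u|_{\Gamma_j(\fN)}$ is a parabolic cocycle, and --- via the isomorphism $\bsmat \pi \\ & 1 \esmat^*$, which merely relabels Borels and preserves parabolicity --- that $u|_{\Gamma_j(\fN)^{\fp}}$ is also parabolic. Either way, restriction of $u$ to $B \cap \Gamma_j(\fN)^{(\fp)}$ is a coboundary, so there exists $a \in L(k-2, E/\Oo)$ with $u(\gamma') = (\gamma' - 1) a$. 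A direct cocycle calculation using $g^{-1}\gamma' = \gamma g^{-1}$ then gives
\[ u(\gamma) \;=\; u(g^{-1} \gamma' g) \;=\; g^{-1}(\gamma' - 1)\bigl(u(g) + a\bigr) \;=\; (\gamma - 1)\, g^{-1}\bigl(u(g) + a\bigr), \]
which lies in $(\gamma - 1) L(k-2, E/\Oo)$, as required.

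The step I expect to be the main obstacle is justifying rigorously that every $\gamma \in S_j$ conjugates \emph{inside} $\Gamma_{\fN, j}$ (and not merely in the ambient $\SL_2(\Oo_{(\fp)})_j$) to an element of one of the vertex stabilizers $\Gamma_j(\fN)$ or $\Gamma_j(\fN)^{\fp}$. This is precisely where Serre's amalgam theorem and its reinterpretation as an action on $T$ must be invoked, with careful bookkeeping of the $\fp_j$-twist implicit in the definition of $\SL_2(\Oo_{(\fp)})_j$. Once this geometric input is in hand, everything else reduces to the cocycle manipulation above together with the essentially tautological translation of parabolic cohomology into the existence of a local coboundary datum $a$.
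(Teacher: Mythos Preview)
Your approach is correct and is essentially the argument the paper has in mind: the paper simply writes ``This is proved as in \cite{Diamond91}, p.~211 using the fact that $H^1_P \subset H^1_Q$,'' and what you have written is precisely the Diamond argument adapted to $F$ --- conjugate the unipotent $\gamma$ via the tree action into one of the amalgam factors, invoke parabolicity of the restriction there (this is the inclusion $H^1_P \subset H^1_Q$), and transport back by a cocycle identity.

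One minor slip: with your convention $\gamma' = g^{-1}\gamma g$ you have $\gamma = g\gamma' g^{-1}$, so the displayed computation should begin with $u(\gamma) = u(g\gamma' g^{-1})$ rather than $u(g^{-1}\gamma' g)$, and the relation is $g\gamma' = \gamma g$ (equivalently $\gamma' g^{-1} = g^{-1}\gamma$), not $g^{-1}\gamma' = \gamma g^{-1}$. Carrying the computation through correctly gives $u(\gamma) = (\gamma-1)\bigl(ga - u(g)\bigr)$, which still lies in $(\gamma-1)L(k-2,E/\Oo)$, so the conclusion is unaffected.
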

\begin{proof} This is proved as in \cite{Diamond91}, p.211 using the fact that $H^1_P \subset H^1_Q$.
\end{proof}

So, it is enough to prove that $H^1_{S_j}(\Gamma_{\fN,j}, L(k-2, E/\Oo))=0$. In fact one can easily see (cf. \cite{Diamond91}, p. 211) that it is enough to prove that $H^1_{S_j}(\Gamma_{\fN,j}, L(k-2, \bfF))=0$, where $\bfF=\Oo/\varpi \Oo$. The inflation-restriction exact sequence
$$0 \to H^1(\SL_2(\OF/\ell), L(k-2, \bfF)) \to H^1(\Gamma_{\fN,j}, L(k-2, \bfF)) \to H^1(\Gamma_{\fN\ell, j}, L(k-2, \bfF))$$ gives rise to an exact sequence \be \label{D1} 0 \to H^1_Q(\SL_2(\OF/\ell), L(k-2, \bfF)) \to H^1_{S_j}(\Gamma_{\fN,j}, L(k-2, \bfF)) \to H^1_{S_j \cap \Gamma_{\fN\ell, j}}(\Gamma_{\fN\ell, j}, L(k-2, \bfF)),\ee where $Q$ is an $\ell$-Sylow subgroup of $\SL_2(\OF/\ell)$. 

\begin{lemma}\label{ST} One has $H^1_Q(\SL_2(\OF/\ell), L(k-2, \bfF))=0$.  \end{lemma}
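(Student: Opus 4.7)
The plan is to adapt Diamond's modular-representation-theoretic argument from the proof of Lemma 3.2 in \cite{Diamond91} to the larger group $G := \SL_2(\OF/\ell)$, with the hypothesis $\ell > k - 2$ playing the same role it does in \cite{Diamond91}. First I would analyze the structure of $G$ according to the splitting behavior of $\ell$ in $F$. Since $\ell \nmid \#\OF^{\times}$ forces $\ell \geq 5$, and we may assume $\ell$ is unramified in $F$, the quotient $\OF/\ell$ is either $\bfF_{\ell^2}$ (inert case) or $\bfF_\ell \times \bfF_\ell$ (split case). In both cases an $\ell$-Sylow $Q$ may be taken to be the group of unipotent upper-triangular matrices, sitting inside a Borel subgroup $B = TQ$ with $T \cong (\OF/\ell)^{\times}$ of order coprime to $\ell$.

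The hypothesis $\ell > k - 2$ ensures, via Steinberg's tensor product theorem and the fact that $\Sym^m(\bfF)$ is irreducible for $m < \ell$, that $L(k-2, \bfF) = \Sym^{k-2}(\bfF) \otimes_{\OF} \Sym^{k-2}(\bfF)$ (with the second factor twisted by the non-trivial Galois action of $F/\bfQ$) is irreducible as a $G$-module: an external tensor product $V_1 \boxtimes V_2$ of two irreducible $\SL_2(\bfF_\ell)$-modules of dimension $\leq \ell - 1$ in the split case, and the irreducible $\SL_2(\bfF_{\ell^2})$-module of restricted highest weight $(k-2)(1 + \ell)$ in the inert case. Since $|T|$ is coprime to $\ell$, the Hochschild--Serre sequence for $Q \lhd B$ collapses to $H^i(B, V) \cong H^i(Q, V)^T$. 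In the split case, the K\"unneth formula combined with Diamond's original Lemma 3.2 applied to each $\SL_2(\bfF_\ell)$-factor immediately yields the vanishing. In the inert case, I would follow Diamond and use the Bruhat decomposition $G = B \sqcup BwB$ together with Mackey theory to identify $H^1_Q(G, L(k-2,\bfF))$ as the $T$-invariants of an explicit cokernel of $(\gamma - 1)$ acting on $L(k-2, \bfF)$, modulo relations coming from the Weyl element $w$.

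The main obstacle is the explicit $T$-weight calculation in the inert case. The weights of $T = \bfF_{\ell^2}^{\times}$ on $L(k-2, \bfF)$ are $\{a + \ell b \pmod{\ell^2 - 1} : a, b \in \{-(k-2), -(k-4), \dots, k-2\}\}$, and the bound $\ell > k - 2$ is used precisely to guarantee that these weights are pairwise distinct modulo $\ell^2 - 1$, so that the trivial weight occurs only at $(a, b) = (0, 0)$ (which itself arises only when $k$ is even). A direct check using the explicit form of the $w$-action on the highest-weight line then shows that this single remaining $T$-fixed contribution is eliminated by the Weyl-element relation in the image of $H^1(G, V) \to H^1(B, V)$, completing the proof.
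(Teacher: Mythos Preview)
Your split-case argument works (and is actually simpler than you suggest: K\"unneth plus the vanishing of $(\Sym^{k-2})^{\SL_2(\bfF_\ell)}$ for $0 < k-2 < \ell$ already gives $H^1(G,V)=0$, so $H^1_Q(G,V)=0$ a fortiori). The inert case, however, contains a genuine error. Your key claim that the $T$-weights $a + \ell b \pmod{\ell^2-1}$ are pairwise distinct, and in particular that the trivial weight occurs only at $(a,b)=(0,0)$, is false under the hypothesis $\ell > k-2$. For $\ell = 5$, $k=6$ (so $k-2=4$), the pairs $(4,4)$, $(0,0)$, $(-4,-4)$ all yield weight $\equiv 0 \pmod{24}$; more generally, whenever $(\ell-1)/2 \le k-2 \le \ell-1$ the trivial weight has multiplicity greater than one. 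Since your entire endgame is to reduce to a single $T$-fixed line and kill it with the Weyl relation, the argument collapses in this range.

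Beyond this, your identification of $H^1_Q(G,V)$ with the $T$-invariants of ``an explicit cokernel of $(\gamma-1)$'' is not made precise: $Q \cong (\bfF_{\ell^2},+)$ is not cyclic, so there is no single $(\gamma-1)$ whose cokernel both carries a $T$-action and computes the right object, and $H^1_Q$ (which imposes a condition for \emph{each} $\gamma \in Q$ separately) is not simply $\ker\bigl(\textup{res}\colon H^1(G,V) \to H^1(Q,V)\bigr)$. The paper's approach---which really is Diamond's Lemma~3.1, not the Bruhat/weight computation you describe---sidesteps all of this: one embeds $L(k-2,\bfF)$ into $I \otimes I$ with $I = \Ind_Q^G(\bfF)$, uses Shapiro's lemma together with the solvability of $Q$ to show $H^1_Q(G, I\otimes I) = 0$, and then proves that the relevant summand $(I_{k-2}\otimes I_{k-2})/L(k-2,\bfF)$ has no $G$-invariants via an explicit decomposition of $I_n$ and a further Shapiro argument (adapting \cite{SengunTurkelli09}). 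No distinctness of weights is needed.
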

\begin{proof} This is just an adaptation of the proof of Lemma 3.1 in \cite{Diamond91}. 
Let us outline the arguments for the case of an inert $\ell$ - the split case is proved similarly. Set $G=\SL_2(\OF/\ell)$ and write $I$ for the representation $\Ind_Q^G (\bfF)$, where we treat $\bfF$ as a trivial $Q$-module. We can regard elements of $I$ as $\bfF$-valued functions on $\bfF_{\ell^2} \times \bfF_{\ell^2} \setminus \{(0,0)\}$. Set $\Sigma :=\{ \sigma :\bfF_{\ell^2} \hookrightarrow \bfF\}$ and write $\Sym^{k-2}_{\sigma}(\bfF)$ for the $\bfF_{\ell^2}[G]$-module of homogeneous polynomials in two variables with coefficients in $\bfF$ where the actions of both $\bfF_{\ell^2}$ and $G$ are via the embedding $\sigma$. While the action of $G$ on $I$ is canonical we can also add an $\bfF_{\ell^2}$-module structure via $\sigma$ and denote the resulting $\bfF_{\ell^2}[G]$-module by $I_{\sigma}$. For every $\sigma \in \Sigma$ we then have a natural $\bfF_{\ell^2}[G]$-equivariant injection $\phi_{\sigma}: \Sym^{k-2}_{\sigma}(\bfF) \to I_{\sigma}$
given by $\phi_{\sigma}(P)(a,b) = P(\sigma(a),\sigma(b))$. We then define an $\bfF_{\ell^2}[G]$-equivariant injection
$\phi:L(k-2, \bfF) \to \bigotimes_{\sigma \in \Sigma} I_{\sigma}$ by $\phi :=\bigotimes_{\sigma \in \Sigma} \phi_{\sigma}$. Let us fix an ordering of the elements of $\Sigma$ and denote them by $\sigma_1$ and $\sigma_2$. We will introduce a convention that for $\bfF$-modules $M$, $N$ in tensor products $M \otimes_{\bfF_{\ell^2}}N$ the $\bfF_{\ell}^2$-action on the module on the left of $\otimes_{\bfF_{\ell}^2}$ is via $\sigma_1$ and on the module on the right of $\otimes_{\bfF_{\ell}^2}$ the action is via $\sigma_2$. In particular we will just write  $I\otimes I$,  instead of $\bigotimes_{\sigma \in \Sigma} I_{\sigma}$.
Note that $I \otimes I \cong \Ind_Q^G(\bfF\otimes_{\bfF_{\ell^2}}\textup{Res}_Q^G I)$, where $\textup{Res}_Q^G$ denotes the restriction to $Q$ functor. It is then a consequence of Shapiro's lemma that $$H^1_Q(G, I \otimes I) = H^1_Q(Q, \bfF\otimes_{\bfF_{\ell^2}}\textup{Res}_Q^G I).$$ Since $Q$ is solvable, an easy calculation shows that the latter cohomology group is zero. 
Thus  we have reduced the problem to showing that the sequence:
\be \label{seq}0 \to L(k-2, \bfF)^G \to  (I\otimes I)^G \to \left(\frac{I\otimes I}{L(k-2,\bfF)}\right)^G \to 0\ee is exact. The rest of the proof consists of decomposing the module $I\otimes I$ and analyzing its $G$-fixed points. Here  we follow closely the strategy of the proof of Lemma 3.1 in \cite{Diamond91} with modifications needed to account for dealing with $I\otimes I$ instead of just $I$. In this we use some results of \cite{SengunTurkelli09} which dealt with a very similar situation.

We have $I_{\sigma}=\bigoplus_{n=0}^{\ell^2-2} I_{n,\sigma}$, where $$I_{n,\sigma}=\{f: \bfF_{\ell^2} \times \bfF_{\ell^2} \setminus \{(0,0)\}\to \bfF \mid f((xa,xb))=\sigma(x)^n f((a,b))\}.$$ Observe that every $I_{n,\sigma}$ is an $\bfF$-vector space of dimension $\ell^2+1$ and that $L(k-2, \bfF) \subset I_{k-2}\otimes I_{k-2}:=I_{\k-2,\sigma_1}\otimes_{\bfF_{\ell^2}}I_{k-2,\sigma_2}$. Moreover, note that the action of $G$ on $I\otimes I$ preserves every summand $I_m \otimes I_n$, so all we need to prove is that the map $$(I_{k-2}\otimes I_{k-2})^G \to \left(\frac{I_{k-2}\otimes I_{k-2}}{L(k-2, \bfF)}\right)^G$$ is surjective. In fact we will prove that the module on the right is zero. To do this we decompose $I_{k-2}\otimes I_{k-2}$ as in 
 Lemma 5.5 of \cite{SengunTurkelli09} (with obvious modifications), and thus it suffices to show that \be \label{eq54} (\Sym_{\sigma_1}^{\ell^2-1-(k-2)}(\bfF)) \otimes I_{k-2})^G\oplus (I_{k-2}\otimes \Sym_{\sigma_2}^{\ell^2-1-(k-2)} (\bfF))^G=0.\ee
Write $\bfF^{r,\sigma}$ for the one-dimensional $\bfF$-vector space on which $\bmat a&b\\c&d \emat \in G$ acts via $\sigma(d)^r$. Note that we have an $\bfF_{\ell^2}[G]$-module isomorphism: \begin{multline} I_{r,\sigma} \cong \Ind_P^G(\bfF^{r,\sigma})\\
= \left\{ f: G \to \bfF \mid f\left(\bmat d^{-1} & b \\ 0 & d \emat g\right) = \sigma(d)^r f(g) \hs\textup{for every}\hs g\in G, \bmat d^{-1} & b \\ 0 & d \emat\in P\right\},\end{multline} where $P\subset G$ is the upper-triangular Borel subgroup of $G$.
One then has \be \label{eq76}I_{k-2,\sigma_1}\otimes \Sym^{\ell^2-1-(k-2)}_{\sigma_2}(\bfF) \cong \Ind_P^G(\bfF^{k-2, \sigma_1}\otimes \textup{Res}_P^G  \Sym^{\ell^2-1-(k-2)}_{\sigma_2}(\bfF) ).\ee It is easy to check that $(\bfF^{k-2, \sigma_1}\otimes\Sym^{\ell^2-1-(k-2)}_{\sigma_2}(\bfF))^P=0$ or our range of $k$. Then by Shapiro's Lemma and (\ref{eq76}) we obtain that the second direct summand in (\ref{eq54}) is zero. We prove that the first one is zero in an analogous way. \end{proof}


It now suffices to prove the vanishing of $H^1_{S_j \cap \Gamma_{\fN\ell, j}}(\Gamma_{\fN\ell, j}, L(k-2, \bfF)).$
For this first note that $L(k-2, \bfF)$ is a trivial $\Gamma_{\fN\ell, j}$-module, so one has $$ H^1_{S_j \cap \Gamma_{\fN\ell, j}}(\Gamma_{\fN\ell, j}, L(k-2, \bfF)) = \ker (\Hom(\Gamma_{\fN\ell, j}, L(k-2, \bfF)) \to \Hom(U_{\fN\ell,j}, L(k-2, \bfF))),$$ where $U_{\fN\ell,j}$ is the smallest normal subgroup of $\SL_2(\Oo_{(\fp)})_j$ containing the matrices  of the form $\bmat 1 & x \\ & 1\emat \in \Gamma_{\fN\ell,j}$. This means that $x \in \fN\ell\fp_j \Oo_{(\fp)}$. 
Note that since $\Gamma_{\fN\ell,j}$ is normal in $\SL_2(\Oo_{(\fp)})$, the group $U_{\fN\ell}$ is indeed contained in $\Gamma_{\fN\ell,j}$. 

By a result of Serre (cf. \cite{Serre70}, Theoreme 2(b), p.498 or \cite{Klosin08}, Theorem 11) we have that $U_{\fN\ell,j} = \Gamma_{\fN\ell\fp_j\Oo_{(\fp)}}$. We now argue as in the proof of Proposition 4 of \cite{Klosin08}. Put
$$\Gamma'_{\fN\ell\fp_j}:= \left\{\bmat a&b\\c&d \emat \in \SL_2(\Op)_j \mid
b,c\in \fN\ell\fp_j \Op
\right\}.$$ Let $f \in \Hom(\Gamma_{\fN\ell\fp_j\Oo_{(\fp)}}, L(k-2,\bfF))$.
Since $\Gamma_{\fN\ell\fp_j\Oo_{(\fp)}}$ is a normal subgroup of $\Gamma'_{\fN\ell\fp_j}$ of 
index $N\fp_j -1$ we have $f \left(\Gamma'_{\fN\ell\fp_i}\right)=0$ by our choice of the ideals $\fp_j$.
 On one 
hand $f$ is zero on the elements of the form $\bmat 1\\ c &1 \emat$, 
$c \in \fN \ell\fp_i^{-1} \Op$, and on the other hand elements of this form together with 
$\Gamma'_{\fN\ell\fp_j}$ generate $\Gamma_{\fN\ell,j}$, so $f(\Gamma_{\fN\ell,j})=0$, as 
asserted. This implies that $H^1_{S_j \cap \Gamma_{\fN\ell, j}}(\Gamma_{\fN\ell, j}, L(k-2, \bfF))$ vanishes. 
The claim now follows from exactness of (\ref{D1}).
\end{proof}

\section{The module of congruences} \label{The module of congruences}

In this section we will explain the relationship of Theorems \ref{IharaH2} and \ref{IharaH1} to the problem of the existence of level-raising congruences for the modules of automorphic forms over $F$ and the cohomology. 

In this section we keep the assumptions from section \ref{The Eichler 1}.
Let $M$ be a finitely generated free $\Oo$-module. For a submodule $N$ of $M$ we define $$N^{\rm sat} = (N \otimes_{\Oo}E) \cap M.$$ 
\begin{lemma} [Bella\"iche - Graftieaux, \cite{BellaicheGraftieaux06}, Lemme 4.1.1] \label{4.1.1} Let $u :N \to M$ be an injective homomorphism. For a sufficiently large positive integer $s$ one has an isomorphism of $\Oo$-modules:
$$u(N)^{\rm sat}/u(N) \cong \ker  u_s, \quad \textup{where $u_s:=u\otimes 1 : N \otimes \Oo/\varpi^s \to M\otimes \Oo/\varpi^s.$} $$ \end{lemma}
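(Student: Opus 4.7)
The plan is to identify both $u(N)^{\rm sat}/u(N)$ and $\ker u_s$ with the torsion submodule of the cokernel $C := M/u(N)$, once $s$ is taken sufficiently large.

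First, since $\Oo$ is a DVR with uniformizer $\varpi$, I would directly identify $u(N)^{\rm sat}/u(N)$ with the torsion submodule $C^{\rm tor}$ of $C$: an element of $M$ lies in $u(N)^{\rm sat}$ iff some $\varpi^k$-multiple of it lies in $u(N)$. Because $M$ is finitely generated over the Noetherian ring $\Oo$, so is $C$, and the structure theorem for finitely generated modules over a DVR gives $C^{\rm tor} \cong \bigoplus_i \Oo/\varpi^{n_i}$ for finitely many exponents $n_i$; set $s_0 := \max_i n_i$.

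Next, I would apply $-\otimes_\Oo \Oo/\varpi^s$ to the short exact sequence $0 \to N \xrightarrow{u} M \to C \to 0$. Since $M$ is free, hence flat, the long exact sequence of $\operatorname{Tor}$ collapses to $0 \to \operatorname{Tor}_1^\Oo(C, \Oo/\varpi^s) \to N/\varpi^s N \xrightarrow{u_s} M/\varpi^s M$, whence $\ker u_s \cong \operatorname{Tor}_1^\Oo(C, \Oo/\varpi^s)$. Using the free resolution $0 \to \Oo \xrightarrow{\varpi^s} \Oo \to \Oo/\varpi^s \to 0$, one then identifies $\operatorname{Tor}_1^\Oo(C, \Oo/\varpi^s)$ with the $\varpi^s$-torsion $C[\varpi^s] := \{c \in C : \varpi^s c = 0\}$. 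As soon as $s \geq s_0$ the submodule $C[\varpi^s]$ equals the full $C^{\rm tor}$, and combining this with the first step yields the desired isomorphism.

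There is no real obstacle here: the statement amounts to the observation that the $\operatorname{Tor}$ long exact sequence detects the torsion of $M/u(N)$ once $s$ exceeds the exponent of that torsion. The only point that requires any attention is the existence of a uniform such $s$, which is available precisely because $C = M/u(N)$ is a finitely generated module over the DVR $\Oo$, so its torsion part is bounded.
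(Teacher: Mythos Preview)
The paper does not give its own proof of this lemma; it is simply cited from \cite{BellaicheGraftieaux06}. Your argument is correct and is the standard one: identifying $u(N)^{\rm sat}/u(N)$ with the torsion of $C=M/u(N)$, and then reading off $\ker u_s \cong \operatorname{Tor}_1^{\Oo}(C,\Oo/\varpi^s) \cong C[\varpi^s]$ from the $\operatorname{Tor}$ long exact sequence (using freeness of $M$), with the two agreeing once $s$ exceeds the exponent of $C^{\rm tor}$.
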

Let $A$ and $B$ be two submodules of $M$ such that $A\cap B=0$, $A^{\rm sat}=A$ and $B^{\rm sat}=B$.  
Following Bella\"iche and Graftieaux we define the module of congruences between $A$ and $B$ by $(A\oplus B)^{\rm sat}/(A\oplus B)$. 

The maps $\alpha_q$ ($q=1,2$) induce corresponding (injective) maps 
 $$\alpha_q: L_{\Oo}^q(K_0)^2 \to L_{\Oo}^q(K_1) \quad (f,g) \mapsto [K_01K_1]f + [K_0\eta K_1]g,$$
where the double cosets act on $f$ and $g$ in the usual way.
Define $\alpha^+_1: L_{\Oo}^{2}(K_1) \to L_{\Oo}^2(K_0)^2$ to be the adjoint of $\alpha_1$ and $\alpha^+_2: L_{\Oo}^{1}(K_1) \to L_{\Oo}^1(K_0)^2$ to be the adjoint of $\alpha_2$ with respect to the pairings $\left<\left<\cdot, \cdot\right>\right>$ and $\left<\cdot, \cdot\right>_1$, where $\left<\left<\cdot, \cdot\right>\right>$ is the pairing   $L^1_{\Oo}(K_0)^2 \otimes L_{\Oo}^2(K_0)^2 \to \Oo$ defined by $\left<\left<(f,g), (f',g')\right>\right> = \left<f,f'\right>_0 + \left<g,g'\right>_0$. 

 Set $M_q= L^q_{\Oo}(K_1)$, $A_q=\image (\alpha_q)$, $B_q=A_q^{\perp}\subset M_q$. For brevity write $\bfT_{\Oo}$ for $\bfT_{1, \Oo}$.  
One clearly has $\mS_k(K_1) = M_q\otimes_{\Oo}\bfC = X\oplus Y$, where $X:=(A_1\otimes_{\Oo}\bfC) = (A_2 \otimes_{\Oo}\bfC)$ and $Y:= (B_1 \otimes_{\bfZ}\bfC)=(B_2 \otimes_{\bfZ}\bfC)$ with $X$ and $Y$ stable under the action of $\bfT_{\Oo}$. Define $\bfT_{X}$ (resp. $\bfT_{Y}$) to be the image of $\bfT_{\Oo}$ inside $\End_{\bfC}(X)$ (resp. $\End_{\bfC}(Y)$). Then one has the following canonical inclusion (given by restrictions): $\bfT_{\Oo} \hookrightarrow \bfT_X \oplus \bfT_Y$. 

Let the (finite) quotient $C(\bfT_{\Oo}):=(\bfT_X\oplus \bfT_Y)/\bfT_{\Oo}$ be the Hecke congruence module between $X$ and $Y$. Note that if $C(\bfT_{\Oo}) \neq 0$ there exists Hecke eigenforms $f \in A_q$ and $g\in B_q$ whose eigenvalues (for all Hecke operators in $\bfT_{\Oo}$) are congruent modulo $\varpi$. 

\begin{lemma} \label{decomp1} Let $q\in \{1,2\}$. Suppose that $C(\bfT_{\Oo})=0$ and that $A_q=A_q^{\rm sat}$. Then $M_q^{\rm sat} =M_q=A_q \oplus B_q$, i.e., the module of congruences between $A_q$ and $B_q$ is zero. \end{lemma}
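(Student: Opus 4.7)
The plan is to lift the algebraic decomposition $\mS_k(K_1) = X \oplus Y$ to an integral decomposition of $M_q$ by extracting a Hecke-theoretic idempotent from the hypothesis $C(\bfT_\Oo) = 0$, and then to use the saturation hypothesis on $A_q$ (together with the automatic saturation of $B_q$) to identify the two integral summands with $A_q$ and $B_q$ respectively.

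First I would observe that $C(\bfT_\Oo) = 0$ says that the canonical injection $\bfT_\Oo \hookrightarrow \bfT_X \oplus \bfT_Y$ is surjective, hence an isomorphism of $\Oo$-algebras. Pulling back the idempotent $(1_X, 0) \in \bfT_X \oplus \bfT_Y$, I get an idempotent $e \in \bfT_\Oo$ that acts as the identity on $X$ and as zero on $Y$. Both $A_q = \image(\alpha_q)$ and $B_q$ are $\bfT_\Oo$-stable submodules of $M_q$: $A_q$ because $\alpha_q$ is built from Hecke double cosets, and $B_q$ because it is cut out by a Hecke-equivariant adjoint. Moreover $B_q$ is saturated in $M_q$: via the perfectness of the Urban pairing (Corollary \ref{lat}), $B_q$ is identified with the kernel of the adjoint $\alpha_{q'}^+$ (with $q' \in \{1,2\}$ the opposite index) into the $\Oo$-free module $L^{q'}_\Oo(K_0)^2$ (Lemma \ref{basech}(ii)), and kernels of homomorphisms into free $\Oo$-modules are automatically saturated.

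Next I would prove $M_q = A_q \oplus B_q$. Directness is immediate from $(A_q \cap B_q) \otimes_\Oo E = X \cap Y = 0$ combined with the torsion-freeness of $M_q$. For the surjectivity of the sum, an arbitrary $m \in M_q$ decomposes as $m = em + (1-e)m$. The element $em$ maps into $X \subset M_q \otimes_\Oo E$, so it lies in $A_q^{\rm sat}$, which equals $A_q$ by hypothesis. Symmetrically, $(1-e)m$ maps into $Y$, so it lies in $B_q^{\rm sat} = B_q$ by the saturation just established. Finally $M_q^{\rm sat} = M_q$ holds tautologically since $M_q$ is $\Oo$-free (Lemma \ref{basech}(ii)), so the module of congruences equals $(A_q \oplus B_q)^{\rm sat}/(A_q \oplus B_q) = M_q/(A_q \oplus B_q) = 0$, as required.

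The only step I expect to require genuine care is the saturation of $B_q$: the hypothesis $A_q = A_q^{\rm sat}$ is not symmetric between the two summands, so one cannot simply invoke it on the $B_q$-side and must instead use the additional input from the Urban pairing to realize $B_q$ as the kernel of a morphism into a free module. Once that is in hand, the existence of the Hecke idempotent $e$ delivers the integral splitting essentially for free.
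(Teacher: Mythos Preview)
Your argument is correct and follows essentially the same route as the paper's proof, which invokes \cite{Ghate02cong}, Lemmas 1 and 4, together with the ``easy facts'' $B_q=B_q^{\rm sat}$ and $M_q=M_q^{\rm sat}$; you have simply unpacked the Ghate congruence-module machinery explicitly by extracting the Hecke idempotent from $C(\bfT_\Oo)=0$ and supplied the justification for the saturation of $B_q$ via the Urban pairing. One minor remark: your phrase ``$\alpha_q$ is built from Hecke double cosets'' as the reason for $\bfT_\Oo$-stability of $A_q$ is slightly imprecise---the point is rather that the double cosets $[K_0 1 K_1]$ and $[K_0 \eta K_1]$ commute with every $T_{\fp'}$ for $\fp'\nmid \fN\fp$, so $\alpha_q$ is $\bfT_\Oo$-equivariant---but this is a cosmetic issue and not a gap.
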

\begin{proof} This follows from \cite{Ghate02cong}, Lemma 4 combined with Lemma 1 together with the easy facts that $B_q=B_q^{\rm sat}$ and $M_q=M_q^{\rm sat}$.
\end{proof} 

Assume for the moment that $A_q=A_q^{\rm sat}$ for $q=1,2$ (this equality is sometimes referred to as `Ihara's lemma' - cf. \cite{BellaicheGraftieaux06}). It implies that there exists an $\Oo$-submodule $P_q\subset L_{\Oo}^q(K_1)$ such that $L_{\Oo}^q(K_1) = A_q \oplus P_q$. Given this,  for $n=0,1$ we can construct $\Oo$-module isomorphisms $\Psi_n: L^1_{\Oo}(K_n) \xrightarrow{\sim} L^2_{\Oo}(K_n)$ such that  $\Psi_1 = \Psi_A \oplus \Psi_P$ with $\Psi_A:A_1 \xrightarrow{\sim} A_2$ and $\Psi_P: P_1 \xrightarrow{\sim} P_2$, where $L^q_{\Oo}(K_1) =A_q \oplus P_q$.  Furthermore when we consider $\Psi_n$ as an automorphism of $L^q_{\bfC}(K_n)$ and $\Psi_A$ as an automorphism of $X$, then $\det \Psi_n$ and $\det \Psi_A$ are independent of the choice of $\Psi_n$ and $\Psi_A$ up to a unit in $\Oo$.

Suppose now in addition that $C(\bfT_{\Oo})=0$ (i.e., that there are no level raising congruences). 
Then Lemma \ref{decomp1} implies that $M_q=A_q\oplus B_q$ for $q=1,2$, i.e.,  we can write $\alpha_q$ as $(\alpha_A,0)$. Consider the following sequence of isomorphisms:
\be \label{diag90} L^1_{\Oo}(K_0)^2 \xrightarrow{\alpha_A}  A_1 \xrightarrow{\Psi_A} A_2 \xrightarrow{\alpha^+_A}  L^2_{\Oo}(K_0)^2 \xrightarrow{(\Psi_0, \Psi_0)^{-1}}  L^1_{\Oo}(K_0)^2\ee where surjectivity of $\alpha_A^+$ follows from perfectness of the pairings involved (Theorem \ref{2.5.1}). For $q=2$ we get an analogous diagram with $L^1$'s and $A_1$'s interchanged with $L^2$'s and $A_2$'s and the isomorphisms $\Psi_A$ and $(\Psi_0, \Psi_0)$ replaced by their inverses. 
Using self-adjointness of the Hecke operators in $\bfT_{n,\Oo}$ and arguing as in the proof of Lemma 2 of \cite{Taylor89}  one can show that $\alpha_{q,\bfC}^+\alpha_{q,\bfC} = \bmat N\fp +1 & T_{\fp}\\ T_{\fp} & (N\fp)^{k-2}(N \fp +1)\emat $, where $\alpha_{q,\bfC}$ (resp. $\alpha^+_{q,\bfC}$) denotes the complexification of $\alpha_q$ (resp. $\alpha^+_q$) and the map $\alpha_{q,\bfC}^+\alpha_{q,\bfC}$ makes sense as an automorphism of $L^1_{\bfC}(K_0)^2$. It follows from this and the fact that the composite in (\ref{diag90}) is an $\Oo$-module isomorphism that $\det (\alpha_{q,\bfC}^+ \alpha_{q,\bfC}) \in \Oo^{\times}$.
 Inverting the logic we conclude that there exist level-raising congruences (i.e., $C(\bfT_{\Oo})\neq 0$) if $\det (\alpha_{q,\bfC}^+ \alpha_{q,\bfC}) \not\in \Oo^{\times}$. We thus arrive at a conclusion that for a cuspidal Hecke eigenform $f$ on $\GL_2(\AF)$ of (parallel) weight $k$ and level $\fN$ such that  $a_{\fp}^2 \equiv (N\fp)^{k-2}(N\fp+1)^2 \pmod{\varpi}$ (where we write $a_{\fp}$ for the $f$-eigenvalue of $T_{\fp}$) there exists a cuspidal Hecke eigenform $g$ on $\GL_2(\AF)$ of weight $k$, level $\fN\fp$ (new at $\fp$) 
  with $f \equiv g$ (mod $\varpi$) - the congruence being between the Hecke eigenvalues for $T_{\fa}$ for $\fa$ a prime with $\fa \nmid \fN\fp$. This would be an analogue for $F$ of classical level-raising congruences of Ribet and Diamond \cite{Ribet84}, \cite{Diamond91}. 

However, the problem is that $A_2$ is not necessarily saturated due to the presence of torsion in the degree two cohomology (see e.g.  \cite{Sengun11} for some examples where this group contains torsion as well as the tables in section 9.8 of \cite{CalegariVenkateshbook} which contain results due to Dunfield exhibiting torsion in the first homology with $\bfZ$-coefficients which is isomorphic to the second degree cohomology with $\bfZ$-coeffcients). Let us discuss it in some more details and offer a weaker result (Theorem \ref{Ihara}) on the level of cohomology. 

By Lemma \ref{4.1.1}  there exists an integer $s$ such that $A_q^{\rm sat}/A_q \cong \ker (\alpha_{q,s})$. Consider the following commutative diagram:
 \be \label{diagIhar} \xymatrix{L^q_{\Oo}(K_0)^2 \ar@{^{(}->}[r]^{\alpha_q}\ar@{->>}[d] & L^q_{\Oo}(K_1)\ar@{->>}[d] \\
L^q_{\Oo}(K_0)^2\otimes\Oo/\varpi^s \ar[r]^{\alpha_{q,s}:=\alpha_q\otimes 1} \ar@{=}[d] & L^q_{\Oo}(K_1)\otimes\Oo/\varpi^s \ar@{=}[d]\\
\bigoplus_j (H^q_P(\Gamma_{0,j}, L(k-2,\Oo))^{\rm tf})^2\otimes \Oo/\varpi^s \ar[r]^{\alpha_q\otimes1} & \bigoplus_j (H^q_P(\Gamma_{1,j}, L(k-2,\Oo))^{\rm tf})\otimes \Oo/\varpi^s.}\ee 

\begin{thm} \label{Ihara} For every positive integer $s$ the map $$\alpha_q\otimes 1: \bigoplus_j (H^q_P(\Gamma_{0,j}, L(k-2,\Oo)))^2\otimes \Oo/\varpi^s \to \bigoplus_j (H^q_P(\Gamma_{1,j}, L(k-2,\Oo)))\otimes \Oo/\varpi^s$$ is injective. \end{thm}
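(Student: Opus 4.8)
The plan is to deduce this from the injectivity statements already proven, namely Theorems \ref{IharaH2} and \ref{IharaH1}, which give injectivity of $\alpha_q$ on the groups $\bigoplus_j H^q_P(\Gamma_{i,j}, L(k-2,E/\Oo))$. The point is to pass from the divisible coefficients $E/\Oo$ to the torsion coefficients $\Oo/\varpi^s$ using the long exact sequences attached to the short exact sequences of coefficient modules $0 \to L(k-2,\Oo) \xrightarrow{\varpi^s} L(k-2,\Oo) \to L(k-2,\Oo/\varpi^s) \to 0$ and $0 \to L(k-2,\Oo) \to L(k-2,E) \to L(k-2,E/\Oo) \to 0$. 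First I would write, for each $i\in\{0,1\}$, the cohomology exact sequence coming from the first of these; since the transition maps $\alpha_q$ are maps of coefficient sequences, everything is compatible. Applying the snake lemma to the two-by-two array obtained by taking $\alpha_q$ on level-$0$ cohomology versus level-$1$ cohomology, injectivity of $\alpha_q\otimes 1$ on $H^q_P(-,L(k-2,\Oo/\varpi^s))$ would follow once one knows injectivity of $\alpha_q$ on $H^{q}_P(-, L(k-2,\Oo))$ together with the fact that the connecting map to $H^{q+1}_P$ is suitably controlled; but the cleaner route is via $E/\Oo$.

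Concretely: there is a Bockstein-type exact sequence $H^{q}_P(\Gamma_{i,j},L(k-2,E/\Oo))[\varpi^s] \hookleftarrow$ (coming from $\Oo/\varpi^s \cong \varpi^{-s}\Oo/\Oo \subset E/\Oo$), and more usefully $H^q_P(\Gamma_{i,j}, L(k-2,\Oo/\varpi^s))$ surjects onto $H^q_P(\Gamma_{i,j}, L(k-2,E/\Oo))[\varpi^s]$ with kernel a quotient of $H^{q-1}_P(\Gamma_{i,j},L(k-2,E/\Oo))/\varpi^s$. Rather than chase this, I would instead use the diagram \eqref{diagIhar} already displayed in the excerpt: the two vertical maps there are the natural surjections $H^q_P(\Gamma_{i,j}, L(k-2,\Oo)) \twoheadrightarrow H^q_P(\Gamma_{i,j}, L(k-2,\Oo))^{\mathrm{tf}}$, and $H^q_P(-,L(k-2,\Oo))^{\mathrm{tf}}\otimes \Oo/\varpi^s$ sits inside $H^q_P(-,L(k-2,E/\Oo))$ as the $\varpi^s$-torsion of the image of $H^q_P(-,L(k-2,\Oo))$; more precisely, for $q=1$ Lemma \ref{tfH1} says $H^1_!$ (hence $H^1_P$) is already torsion-free, so $H^1_P(-,L(k-2,\Oo))^{\mathrm{tf}} = H^1_P(-,L(k-2,\Oo))$ and its reduction mod $\varpi^s$ injects into $H^1_P(-,L(k-2,\Oo/\varpi^s))$, which in turn maps to $H^1_P(-,L(k-2,E/\Oo))$. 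Chasing through, the restriction of the already-established injective $\alpha_1$ on $H^1_P(-,E/\Oo)$ to this submodule gives injectivity of $\alpha_1\otimes 1$. For $q=2$ one argues the same way: $H^2_P(-,L(k-2,\Oo))^{\mathrm{tf}}\otimes\Oo/\varpi^s$ maps into $H^2_P(-,L(k-2,\Oo)\otimes E/\Oo) = H^2_P(-,L(k-2,E/\Oo))$ (using that tensoring the torsion-free quotient with $\Oo/\varpi^s$ embeds into the $\varpi^s$-divisible hull quotient), and $\alpha_2$ is injective there by Theorem \ref{IharaH2}, which assumed only $\ell\nmid\#\OF^\times$.

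The key step, and the main thing to get right, is the claim that the natural map $H^q_P(\Gamma_{i,j},L(k-2,\Oo))^{\mathrm{tf}}\otimes_{\Oo}\Oo/\varpi^s \longrightarrow H^q_P(\Gamma_{i,j},L(k-2,E/\Oo))$ is \emph{injective}; this is what lets one transfer the $E/\Oo$-injectivity of $\alpha_q$ down to $\Oo/\varpi^s$. This follows because for a finitely generated $\Oo$-module the torsion-free quotient $N$ is free, and the composite $N\otimes\Oo/\varpi^s = N/\varpi^s N \hookrightarrow (N\otimes E)/N = N\otimes(E/\Oo)$ is the inclusion of the $\varpi^s$-torsion, which is injective; one then needs that $H^q_P(\Gamma_{i,j},L(k-2,\Oo))\otimes E/\Oo$ computes $H^q_P(\Gamma_{i,j},L(k-2,E/\Oo))$ modulo the bounded torsion error, which is exactly the content of the exact sequence $H^q_P(-,L(k-2,\Oo))\otimes E/\Oo \to H^q_P(-,L(k-2,E/\Oo)) \to H^{q+1}_P(-,L(k-2,\Oo))[\text{tors}]$ together with the fact that the cohomological dimension bound from \cite{Serre71} (as used in the proof of Theorem \ref{IharaH2}) makes the relevant higher cohomology finitely generated. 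Once this compatibility is pinned down, the injectivity of $\alpha_q\otimes 1$ is immediate from \eqref{diagIhar} and the already-proven theorems, and one lets $s$ range over all positive integers.
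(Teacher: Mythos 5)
There is a genuine gap, and it is precisely at the point the theorem is designed to handle. Your argument runs through the torsion-free quotients: your key claim is injectivity of $H^q_P(\Gamma_{i,j},L(k-2,\Oo))^{\mathrm{tf}}\otimes_{\Oo}\Oo/\varpi^s \to H^q_P(\Gamma_{i,j},L(k-2,E/\Oo))$, and the conclusion you can extract from it is injectivity of $\alpha_q\otimes 1$ on $(H^q_P)^{\mathrm{tf}}\otimes\Oo/\varpi^s$. But Theorem \ref{Ihara} is a statement about the full module $H^q_P(\Gamma_{i,j},L(k-2,\Oo))\otimes\Oo/\varpi^s$, and for $q=2$ this group may contain torsion (the paper emphasizes this repeatedly; it is why Corollary \ref{sat} is conditional for $A_2$). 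A torsion class of $H^2_P(\Gamma_{0,j},L(k-2,\Oo))$ contributes to $H^2_P\otimes\Oo/\varpi^s$ but dies in the torsion-free quotient, so no argument carried out inside $H^q_P(-,L(k-2,E/\Oo))$ in degree $q$ can detect it: by the long exact sequence of $0\to L(k-2,\Oo)\to L(k-2,E)\to L(k-2,E/\Oo)\to 0$, the torsion of $H^q(-,L(k-2,\Oo))$ is the image of the connecting map from $H^{q-1}(-,L(k-2,E/\Oo))$ and has no injective comparison into $H^q(-,L(k-2,E/\Oo))$. This is exactly why the paper works instead with the finite coefficients $\Oo/\varpi^s$: from $0\to L(k-2,\Oo)\xrightarrow{\varpi^s} L(k-2,\Oo)\to L(k-2,\Oo/\varpi^s)\to 0$ one gets an embedding of the \emph{full} module $H^q_P(\Gamma_{i,j},L(k-2,\Oo))\otimes\Oo/\varpi^s$ into $H^q_P(\Gamma_{i,j},L(k-2,\Oo/\varpi^s))$, and then applies the injectivity of $\alpha_q$ with torsion coefficients (Theorems \ref{IharaH1} and \ref{IharaH2}). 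In other words, the route you mention in your first paragraph and then set aside as less clean is the one the paper actually takes; the ``cleaner route via $E/\Oo$'' loses the torsion and proves a different (for $q=2$, weaker in the relevant direction) statement.

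A second, smaller gap: you invoke an exact sequence $H^q_P(-,L(k-2,\Oo))\otimes E/\Oo \to H^q_P(-,L(k-2,E/\Oo))\to H^{q+1}_P(-,L(k-2,\Oo))[\mathrm{tors}]$ as if it were automatic. Parabolic cohomology is defined as a kernel of restriction to the Borel boundary groups and does not sit in a long exact sequence for free; establishing the needed exactness (equivalently, controlling the boundary terms) is real content. In the paper this is the role of diagram (\ref{parcoh}) together with Lemma \ref{ass1}, which shows $H^2(\Gamma_{i,j,B},L(k-2,\Oo))$ is torsion-free via Hochschild--Serre for $\Gamma_{0,j}\cap N\cong\bfZ^2$; for $q=1$ one uses exactness at the $H^0$ level. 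Even the auxiliary facts you use implicitly (e.g.\ that $H^q_P$ is saturated in $H^q$, so that tensoring the inclusion with $\Oo/\varpi^s$ or mapping into the divisible hull stays injective) rest on this boundary torsion-freeness. So to repair the proof you should: (1) replace the $E/\Oo$ target by $H^q_P(-,L(k-2,\Oo/\varpi^s))$ and prove the embedding of $H^q_P(-,L(k-2,\Oo))\otimes\Oo/\varpi^s$ into it by the snake-lemma argument on the three-row diagram (full cohomology, boundary cohomology, and their kernels), supplying the torsion-freeness of the boundary $H^2$; and (2) justify injectivity of $\alpha_q$ on $H^q_P(-,L(k-2,\Oo/\varpi^s))$ (the paper cites Theorems \ref{IharaH1} and \ref{IharaH2} for this), rather than only on $H^q_P(-,L(k-2,E/\Oo))$.
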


It follows from Theorem \ref{Ihara} that the map $$\alpha_q:\bigoplus_j (H^q_P(\Gamma_{0,j}, L(k-2,\Oo)))^2 \to \bigoplus_j (H^q_P(\Gamma_{1,j}, L(k-2,\Oo)))$$ is injective and that the image of the first module is saturated in the second one. However, Theorem \ref{Ihara} implies injectivity of the bottom map in diagram (\ref{diagIhar}) only in the case of 
 $H^1$ which is torsion-free by Lemma \ref{tfH1}. Thus Theorem \ref{Ihara} falls short of proving that $A_2$ is saturated and we only obtain the following corollary:

\begin{cor} \label{sat} One has $A_1^{\rm sat} = A_1$. Moreover if $\bigoplus_jH^2_P(\Gamma_{i,j}, L(k-2, \Oo))$ is torsion-free for $i=0,1$, then $A^{\rm sat}_2=A_2$. \end{cor}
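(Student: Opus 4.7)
The plan is to read off Corollary \ref{sat} directly from diagram (\ref{diagIhar}), combined with Theorem \ref{Ihara} and Lemma \ref{4.1.1}. The key observation is that the passage from the middle row of (\ref{diagIhar}) to the bottom row is tautological once one knows that the appropriate cohomology groups are torsion-free, since $L^q_{\Oo}(K_n)$ was defined precisely as $\delta_q^{-1}(\iota_q(H^q_!(Y_n,\tilde{L}(k-2,\Oo)))^{\rm tf})$ (or the variant with the dual sheaf), so the torsion-free quotient is the obstruction to identifying the two rows.

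First I would fix $q\in\{1,2\}$ and apply Lemma \ref{4.1.1} to the inclusion $\alpha_q : L^q_{\Oo}(K_0)^2 \hookrightarrow L^q_{\Oo}(K_1)$, which furnishes an integer $s$ (depending on $q$) and an isomorphism of $\Oo$-modules
\[
A_q^{\rm sat}/A_q \;\cong\; \ker(\alpha_{q,s}),
\]
where $\alpha_{q,s}=\alpha_q\otimes 1$ is the middle horizontal arrow in diagram (\ref{diagIhar}). Thus to prove $A_q^{\rm sat}=A_q$ it suffices to establish injectivity of $\alpha_{q,s}$.

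Next I would address the case $q=1$. By Lemma \ref{tfH1} (applied via the fixed embedding $\Oo\hookrightarrow\overline{\bfQ}_\ell\cong\bfC$, and using that $\ell>k-2$) the groups $\bigoplus_j H^1_P(\Gamma_{i,j},L(k-2,\Oo))$ are already torsion-free for $i=0,1$, so that the vertical identifications in the lower square of (\ref{diagIhar}) are literal equalities (no passage to \emph{tf} is needed). Theorem \ref{Ihara} then asserts injectivity of the bottom horizontal map, hence of $\alpha_{1,s}$; combining with the isomorphism from the first step gives $A_1^{\rm sat}=A_1$. For $q=2$ the argument is formally identical: the extra hypothesis that $\bigoplus_j H^2_P(\Gamma_{i,j},L(k-2,\Oo))$ is torsion-free for $i=0,1$ is exactly what is needed to make the lower vertical identifications in (\ref{diagIhar}) into equalities, after which Theorem \ref{Ihara} yields injectivity of $\alpha_{2,s}$ and hence $A_2^{\rm sat}=A_2$.

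There is no real obstacle once one has Theorem \ref{Ihara} and Lemma \ref{4.1.1} in hand; the only subtlety to watch is the transition from integral cohomology to the lattice $L^q_{\Oo}(K_n)$ (which by definition kills torsion), and this is precisely why torsion-freeness of $H^q_P$ is imposed for $q=2$. The corresponding hypothesis is automatic for $q=1$ via Lemma \ref{tfH1}, which is why the first assertion of the corollary is unconditional while the second is conditional on the (genuinely restrictive) torsion-freeness assumption on $H^2_P$.
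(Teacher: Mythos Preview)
Your proposal is correct and follows essentially the same approach as the paper: use Lemma \ref{4.1.1} to identify $A_q^{\rm sat}/A_q$ with $\ker\alpha_{q,s}$, then observe that torsion-freeness of $\bigoplus_j H^q_P(\Gamma_{i,j},L(k-2,\Oo))$ (automatic for $q=1$ by Lemma \ref{tfH1}, assumed for $q=2$) makes the bottom row of diagram (\ref{diagIhar}) coincide with the map of Theorem \ref{Ihara}, whose injectivity then forces $\ker\alpha_{q,s}=0$. The paper's discussion in the paragraph preceding Corollary \ref{sat} says exactly this.
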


Finally, let us note that one cannot use Theorem \ref{Ihara} to prove the existence of level-raising congruences on the level of cohomology groups instead of the lattices $L$ because the pairings $J_n$ are only perfect modulo torsion (Theorem \ref{2.5.1}). 

\begin{proof} [Proof of Theorem \ref{Ihara}]
We need to prove that the map $\alpha_q \otimes 1$ in Theorem \ref{Ihara} is injective. To shorten formulas for a $\bfZ$-module $A$ we put $\tilde{A}:= L(k-2, A)$. The short exact sequence $0 \to \tilde{\Oo} \xrightarrow{\cdot \varpi^s} \tilde{\Oo} \to \widetilde{\Oo/\varpi^s \Oo} \to 0$ gives rise to the following commutative diagram where the bottom two rows are exact sequences  and the objects in the top row are by definition the kernels of the vertical maps. \be \label{parcoh}\xymatrix{ \bigoplus_j H^q_P(\Gamma_{0,j}, \tilde{\Oo})\ar[r]^{\cdot \varpi^s}\ar[d]&\bigoplus_jH^q_P(\Gamma_{0,j}, \tilde{\Oo})\ar[r]\ar[d] & \bigoplus_j H^q_P(\Gamma_{0,j}, \widetilde{\Oo/\varpi^s})\ar[d]\\
\bigoplus_j H^q(\Gamma_{0,j}, \tilde{\Oo}) \ar[r]^{\cdot\varpi^s} \ar[d]&\bigoplus_j H^q(\Gamma_{0,j}, \tilde{\Oo}) \ar[r]\ar[d]&  \bigoplus_j H^q(\Gamma_{0,j}, \widetilde{\Oo/\varpi^s})\ar[d]\\
 \bigoplus_j \bigoplus_{B\in \mB_j} H^q(\Gamma_{0,j,B}, \tilde{\Oo})\ar[r]^{\cdot\varpi^s}&\bigoplus_j \bigoplus_{B\in \mB_j} H^q(\Gamma_{0,j,B}, \tilde{\Oo})\ar[r] &\bigoplus_j \bigoplus_{B\in \mB_j} H^q(\Gamma_{0,j,B}, \widetilde{\Oo/\varpi^s})  .}\ee
 One gets an identical diagram for the groups $\Gamma_{1,j}$.
By performing a subset of the proof of snake lemma, one can see that the top row is also exact provided that the bottom-left horizontal arrow is injective. For $q=1$ this follows from the exactness of the sequence of $H^0$'s. For $q=2$ this is a consequence of the following lemma: 

\begin{lemma} \label{ass1} The groups $H^2(\Gamma_{0,j,B},\tilde{\Oo})$ and $H^2(\Gamma_{1,j,B},\tilde{\Oo})$ are torsion-free. \end{lemma}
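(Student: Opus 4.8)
The plan is to analyze the groups $\Gamma_{i,j,B} = \Gamma_{i,j} \cap B$ directly. Recall that $B$ runs over a set of representatives of $\Gamma_{i,j}$-conjugacy classes of Borel subgroups of $\GL_2(F)$, and after conjugating we may assume $B$ is the standard upper-triangular Borel. The key point is that $\Gamma_{i,j,B}$ is then an arithmetic subgroup of the Borel, so it sits in an extension
\[
1 \to U \to \Gamma_{i,j,B} \to T \to 1
\]
where $U = \Gamma_{i,j,B} \cap N$ is (commensurable with) a rank-one lattice in the unipotent radical $N \cong F$ (a free abelian group of rank $2$, i.e.\ $\mathbf{Z}^2$, or possibly a rank-one group $\mathbf{Z}$ depending on the cusp), and $T$ is the image in the diagonal torus, which is a finitely generated subgroup of $\OF^\times$ acting on $U$. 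Since $\OF^\times$ is finite for an imaginary quadratic field, $T$ is finite of order dividing $\#\OF^\times$, which is coprime to $\ell$ and hence invertible in $\Oo$.

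First I would reduce to computing the cohomology of $U$ via the Hochschild--Serre spectral sequence $H^p(T, H^q(U, \tilde{\Oo})) \Rightarrow H^{p+q}(\Gamma_{i,j,B}, \tilde{\Oo})$. Because $\# T$ is invertible in $\Oo$, the higher cohomology of $T$ with any $\Oo[T]$-module coefficients vanishes, so the spectral sequence collapses to give $H^q(\Gamma_{i,j,B}, \tilde{\Oo}) \cong H^q(U, \tilde{\Oo})^T$, the $T$-invariants. Thus it suffices to show that $H^2(U, \tilde{\Oo})$ is torsion-free (a direct summand that is torsion-free inside a torsion-free module stays torsion-free, and taking $T$-invariants is taking a direct summand since $\#T$ is invertible). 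Now $U$ is a free abelian group of rank at most $2$, so its cohomology with coefficients in the $\Oo$-free module $\tilde{\Oo} = L(k-2,\Oo)$ can be computed by an explicit Koszul-type resolution: $H^\bullet(U, \tilde{\Oo})$ is the cohomology of the complex $\tilde{\Oo} \to \tilde{\Oo}^r \to \tilde{\Oo}^{\binom{r}{2}}$ with $r = \operatorname{rk} U \le 2$, whose differentials are built from $g - 1$ for generators $g$ of $U$. When $r \le 1$ there is nothing in degree $2$; when $r = 2$, $H^2(U, \tilde{\Oo}) = \tilde{\Oo}/(g_1 - 1, g_2 - 1)\tilde{\Oo}$ is a quotient of the coinvariants, and I would check that under our hypotheses (in particular $\ell > k-2$ and $\ell \nmid \Phi(\fN)$, which control the action of the unipotent part modulo $\ell$) the coinvariants $\tilde{\Oo}_U$ are torsion-free.

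The main obstacle I expect is precisely this last torsion-freeness of $H^2(U, \tilde{\Oo}) = (L(k-2,\Oo))_U$: one must verify that the image of $(g_i - 1)$ acting on $L(k-2,\Oo)$ is a saturated (pure) submodule, equivalently that $(L(k-2,\bfF))_U$ has the ``expected'' dimension. This is where the earlier hypotheses enter: a unipotent generator acts on $L(k-2,\Oo) = \Sym^{k-2}(\Oo) \otimes \overline{\Sym^{k-2}(\Oo)}$ by a unipotent matrix, and the relevant statement is that reduction mod $\varpi$ does not create extra invariants/coinvariants, which follows from $\ell > k-2$ by the standard analysis of $\Sym^{k-2}$ as an $\SL_2$-representation in characteristic $\ell$ (the binomial coefficients appearing are units), together with the condition $\ell \nmid \Phi(\fN)$ ensuring the level-$\fN$ structure does not interfere. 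Concretely I would invoke the same sort of computation already used in the proof of Lemma~\ref{tfH1} and in \cite{SengunTurkelli09}, Lemma~6.2, to conclude that the relevant $H^0$ and coinvariant groups behave well under reduction, which forces $H^2(\Gamma_{i,j,B}, \tilde{\Oo})$ to be torsion-free. The case $q = 2$, $r = 1$ (a rank-one cusp) is trivial, and the split versus inert behavior of $\fp$ and $\fp_j$ only affects the combinatorics of the cusps, not the torsion-freeness argument.
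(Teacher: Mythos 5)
Your first reduction (Hochschild--Serre with the finite Levi part $T$ of order dividing $\#\OF^{\times}$, hence invertible in $\Oo$, giving $H^2(\Gamma_{i,j,B},\tilde{\Oo})\cong H^2(U,\tilde{\Oo})^T$) agrees in substance with the paper's first step, and your identification $H^2(U,\tilde{\Oo})\cong \tilde{\Oo}_U=L(k-2,\Oo)/(g_1-1,g_2-1)L(k-2,\Oo)$ for $U\cong\bfZ^2$ is correct. The problem is that after this reduction you have not proved anything: the torsion-freeness of these coinvariants \emph{is} the entire content of the lemma, and you dispose of it with an appeal to ``the standard analysis of $\Sym^{k-2}$ in characteristic $\ell$'' plus the hypotheses $\ell>k-2$ and $\ell\nmid\Phi(\fN)$, citing Lemma \ref{tfH1} and Lemma 6.2 of \cite{SengunTurkelli09}. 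Those references concern invariants ($H^0$) and do not address coinvariants of a rank-two unipotent lattice acting on $\Sym^{k-2}\otimes\ov{\Sym^{k-2}}$; moreover the hypotheses you invoke are not the ones that actually control the torsion here (note the paper proves this lemma without assuming $\ell>k-2$ or $\ell\nmid\Phi(\fN)$, which belong to Theorem \ref{IharaH1}). What really enters is the arithmetic of the lattice $\Lambda=\Gamma_{i,j}\cap N$ and of $F$ itself: already for $k=3$, writing $x_1,x_2$ for a $\bfZ$-basis of $\Lambda$, a direct Smith-normal-form computation shows the torsion of $L(1,\Oo)_U$ is governed by the ideal $(x_1,\bar x_1,x_2,\bar x_2)\cdot(x_1\bar x_2-x_2\bar x_1)\Oo$, and $x_1\bar x_2-x_2\bar x_1$ has absolute norm $N(\Lambda)^2|D_F|$ up to units. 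So torsion appears exactly when $\ell$ divides $D_F$ or the norm of the cusp lattice (built from $\fN$, $\fp$ and the class-group representatives $\fp_j$) --- quantities controlled by the standing assumption $\ell\nmid N(\fN\fp)D_F\#\OF^{\times}$ of section \ref{The Eichler 1} and by the choice of the $\fp_j$, not by $\ell\nmid\Phi(\fN)$; and the two tensor factors being related by complex conjugation means the one-variable $\Sym^{k-2}$ story does not carry over formally. As written, the key step is asserted, not proved, and with the wrong inputs; this is a genuine gap, even though the statement you are asserting is (under the right hypotheses) what the lemma claims.

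For comparison, the paper takes a different and less computational route precisely to sidestep this coinvariant analysis: it filters $G=\Gamma_{0,j}\cap N\cong\bfZ^2$ by a subgroup $H\cong\bfZ$, uses Hochschild--Serre together with ${\rm cd}(H)={\rm cd}(G/H)=1$ to embed $H^2(G,\tilde{\Oo})$ into $H^1(G/H,H^1(H,\tilde{\Oo}))$, and then shows that the latter is torsion-free by the d\'evissage $0\to\tilde{\Oo}\to\tilde{\Oo}\otimes E\to\tilde{\Oo}\otimes E/\Oo\to 0$, reducing everything to the surjectivity of $H^1(H,\tilde{\Oo}\otimes E)^{G/H}\to H^1(H,\tilde{\Oo}\otimes E/\Oo)^{G/H}$, i.e.\ to an exactness statement for $G/H$-invariants of the $H$-coinvariant modules. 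If you want to salvage your direct approach, you must actually carry out the coinvariant computation for $L(k-2,\Oo)$ under the two commuting unipotents (with conjugate entries on the second factor) and show the relevant minors generate the unit ideal under the section \ref{The Eichler 1} hypotheses; alternatively, adopt the paper's filtration argument, which trades that computation for a softer homological one.
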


\begin{proof} Let $B \subset \SL_2(F)$ be a Borel subgroup. Write $B=MN$ for its Levi decomposition. Note that $M \cap \Gamma_{0,j}$ is a finite group of order dividing $\#\OF^{\times}$, hence  we have $H^2(\Gamma_{0,j,B},\tilde{\Oo}) \hookrightarrow H^2(\Gamma_{0,j} \cap N, \tilde{\Oo})$, so it is enough to show that the latter group is torsion-free.

 It is easy to see that $ \Gamma_{0,j} \cap N  \cong \bfZ^2$ as abelian groups. Write $G$ for $\Gamma_{0,j} \cap N \cong \bfZ^2$ and choose a subgroup $H \subset G$ such that $H\cong \bfZ$. From the Hochshild-Serre spectral sequence one concludes that the sequence
\be \label{HS1}  H^2(G/H, \tilde{\Oo}^H) \to H^2(G,\tilde{\Oo})^* \to H^1(G/H, H^1(H,\tilde{\Oo})),\ee where $H^2(G,M)^* = \ker({\rm res}(H^2(G,M) \to H^2(H,M))$ is exact. 
 Since $H$ has cohomological dimension one we obtain that $H^2(G, \tilde{\Oo})^* = H^2(G, \tilde{\Oo})$ and that $H^2(G/H, \tilde{\Oo}^H)=0$. Hence we conclude from (\ref{HS1}) that $H^2(G,\tilde{\Oo})$ injects into $H^1(G/H, H^1(H,\tilde{\Oo}))$ and thus it is enough to prove that the latter group is torsion-free. 

Note that since the sequence of $H$-invariants $$0 \to \tilde{\Oo}^H \to (\tilde{\Oo}\otimes E)^H \to (\tilde{\Oo}\otimes E/\Oo)^H \to 0$$ is exact 
and $H$ has cohomological dimension one, we obtain a short exact sequence of cohomology groups \be \label{HS2} 0 \to H^1(H, \tilde{\Oo}) \to  H^1(H, \tilde{\Oo}\otimes E) \to H^1(H, \tilde{\Oo}\otimes E/\Oo) \to 0.\ee Since all  the groups in (\ref{HS2}) are naturally $G/H$-modules we can compute their long exact sequence of cohomology for the group $G/H$ and obtain:
\begin{multline}  \label{HS3} 0 \to  H^1(H, \tilde{\Oo})^{G/H} \to  H^1(H, \tilde{\Oo}\otimes E)^{G/H} \xrightarrow{\phi} H^1(H, \tilde{\Oo}\otimes E/\Oo)^{G/H} \to \\
\to H^1(G/H,H^1(H, \tilde{\Oo})) \to  H^1(G/H, H^1(H, \tilde{\Oo}\otimes E)) \end{multline}
Since the last group in (\ref{HS3}) is an $E$-vector space it is clearly torsion-free, so it suffices to prove that $\phi$ is surjective. 

To do this, first note that since $G$ is abelian the group $G/H$ acts on $f \in H^1(H,M)$ by $(\gamma\cdot f)(h) = \gamma \cdot f(h)$. Moreover, since $H \cong \bfZ$ one has $H^1(H, M) \cong \frac{M}{(\sigma-1)M}$, where $\sigma$ is a generator of $H$ and 1 denotes the identity in $H$ (hence zero in $\bfZ$). So, our claim follows from exactness of the following sequence (which is easy to show):

\be \label{HS4} 0 \to \left(\frac{\tilde{\Oo}}{\bmat 0 & 1 \\ 0 & 0 \emat \tilde{\Oo}}\right)^{G/H} \to  \left(\frac{\tilde{\Oo}\otimes E}{\bmat 0 & 1 \\ 0 & 0 \emat \tilde{\Oo}\otimes E}\right)^{G/H} \to \left(\frac{\tilde{\Oo}\otimes E/\Oo}{\bmat 0 & 1 \\ 0 & 0 \emat \tilde{\Oo}\otimes E/\Oo}\right)^{G/H}\to 0 \ee
The same proof works for $\Gamma_{1,j,B}$. 
\end{proof}

 Using the exactness of the top row in (\ref{parcoh}) we get the following commutative diagram 
$$\xymatrix{ (\bigoplus_j H^q_P(\Gamma_{0,j}, \tilde{\Oo}))^2\otimes \Oo/\varpi^s \ar@{=}[r] \ar[d]^{\alpha_q \otimes 1} &  \frac{(\bigoplus_j H_P^q(\Gamma_{0,j}, \tilde{\Oo}))^2}{\varpi^s (\bigoplus_j H_P^q(\Gamma_{0,j}, \tilde{\Oo}))^2} \ar@{^{(}->}[r] \ar[d] & (\bigoplus_j H^q_P(\Gamma_{0,j}, \widetilde{\Oo/\varpi^s}))^2 \ar[d]^{\alpha_{q,\Oo/\varpi^s}}\\ (\bigoplus_j H^q_P(\Gamma_{1,j}, \tilde{\Oo}))\otimes \Oo/\varpi^s \ar@{=}[r]  &  \frac{(\bigoplus_j H^q_P(\Gamma_{1,j}, \tilde{\Oo}))}{\varpi^s (\bigoplus_j H^q_P(\Gamma_{1,j}, \tilde{\Oo}))} \ar@{^{(}->}[r]  & (\bigoplus_j H^q_P(\Gamma_{1,j}, \widetilde{\Oo/\varpi^s})),}$$ where $\alpha_{q,\Oo/\varpi^s}$ is induced from $\alpha_q$.
 The  map $\alpha_{q,\Oo/\varpi^s}$ is injective, by Theorem \ref{IharaH1} when $q=1$ and by Theorem \ref{IharaH2} when $q=2$. Hence the first vertical arrow is injective. \end{proof}


\section{Acknowledgements}
We would like to thank Tobias Berger and Eknath Ghate for many helpful conversations and answering many of our questions. 
We would also like to express our gratitude to the Max-Planck-Institut in Bonn where part of this work was carried out for its hospitality during the author's stay there in the Summer of 2011. Finally, we would like thank Frank Calegari and Akshay Venkatesh for sending us an early version of their book.

\bibliographystyle{amsplain}
\bibliography{standard2}

\end{document}